\title[The Mackey Machine for Groupoid Crossed Products]{The Mackey Machine
  for Crossed Products by Regular Groupoids. II}
\author{Geoff Goehle}
\address{Mathematics and Computer Science Department, Stillwell 426,
  Western Carolina University, Cullowhee, NC 28723}
\email{grgoehle@email.wcu.edu}
\subjclass[2000]{47L65,47A67}
\begin{document}

\begin{abstract}
We prove that given a regular groupoid $G$ whose isotropy subgroupoid
$S$ has a Haar system, along with a dynamical system $(A,G,\alpha)$,
there is an action of $G$ on the spectrum of $A\rtimes S$ such
that the spectrum of $A\rtimes G$ is homeomorphic to the orbit space
of this action via induction.  
In addition, we give a strengthening of these results in
the case where the crossed product is a groupoid algebra.  
\end{abstract}

\maketitle

\section*{Introduction}

This paper continues the development of the Mackey Machine for
groupoid crossed products which was started in \cite{inducpaper}.  In
the first paper of this series we constructed an induction process for
groupoid crossed products and proved that for crossed products by
regular groupoids every irreducible representation of $A\rtimes G$ is
induced from a representation of a ``stabilizer'' crossed product
$A(u)\rtimes S_u$.

In this work we realize our ultimate goal of identifying the space of
irreducible representations of certain crossed products by exhibiting 
a natural action of $G$ on the spectrum
$(A\rtimes S)\sidehat$, showing that induction defines a map from the spectrum
of $A\rtimes S$ onto the spectrum of $A\rtimes G$, and then proving
that this map factors to a homeomorphism
between the orbit space $(A\rtimes S)\sidehat/G$ and $(A\rtimes
G)\sidehat$.  This identification theorem 
is a partial generalization of
work done by Williams for transformation group $C^*$-algebras
\cite{primtrangroup} and is also related to work done by Orloff Clark
on groupoid $C^*$-algebras \cite{ccrgca,principclark}.
An outline of the paper is roughly as follows. 
Section \ref{sec:group-cross-prod} covers
some basic crossed product theory, as well as a few facts concerning crossed
products by groupoid group bundles.  Section \ref{sec:cross-prod-with}
contains the main result of the paper.  The proof is quite technical
and has been broken up into four subsections.  We finish with Section
\ref{sec:groupoid-algebras} which strengthens the results of Section
\ref{sec:cross-prod-with} in the context of groupoid algebras.

Before we begin in earnest we should first make some remarks about our
hypotheses.   In order to work with the crossed product
$A\rtimes S$ we must assume that $S$ has a Haar system.  It is worth 
pointing out that this is equivalent to assuming that the
stabilizer subgroups $S_u$ vary continuously with respect to the Fell
topology in $S$ \cite{renaultgcp}.  Finally, it should be noted that
to a large extent
the results of this paper are contained, with more detail and a great deal of
background material, in the author's thesis \cite{mythesis}.

\section{Preliminaries}
\label{sec:group-cross-prod}

We will be using the same notation and terminology as
\cite{inducpaper}.  In particular, 
we will let $G$ denote a second countable,
locally compact Hausdorff groupoid with a Haar system $\lambda$.
Given an element $u\in G\unit$ of the unit space of $G$ we
will use $S_u=\{\gamma\in G : s(\gamma)=r(\gamma)=u\}$ 
to denote the stabilizer, or isotropy, subgroup of $G$
over $u$.  We use $S=\{\gamma\in G : s(\gamma) = r(\gamma)\}$ to
denote the stabilizer, or isotropy, subgroupoid of $G$ formed by
bundling together all of the stabilizer subgroups.  
We will let $A$ denote a separable
$C_0(G\unit)$-algebra and will let $\mcal{A}$ be its associated
usc-bundle.  Given $A$ and $G$ as above
we let $\alpha$ denote an action of $G$ on $A$ as defined in
\cite[Definition 4.1]{renaultequiv} and call $(A,G,\alpha)$ a groupoid
dynamical system.  We 
construct the groupoid crossed product $A\rtimes_\alpha G$ as a universal
completion of the algebra of compactly supported sections
$\Gamma_c(G,r^*\mcal{A})$ in the usual fashion.

One important aspect of groupoid dynamical systems is that given
$(A,G,\alpha)$ there is a natural action of $G$ on the spectrum of $A$
induced by $\alpha$. 

\begin{prop}
\label{prop:3}
If $(A,G,\alpha)$ is a groupoid dynamical system then there is a
continuous action of $G$ on $\widehat{A}$ given by $\gamma\cdot \pi =
\pi\circ \alpha_\gamma\inv$.  
\end{prop}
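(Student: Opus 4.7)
The plan is to verify well-definedness and the algebraic action axioms first, and then to tackle continuity, which is where the real work lies. Because $A$ is a $C_0(G\unit)$-algebra with associated usc-bundle $\mcal{A}$, every irreducible representation $\pi\in\widehat{A}$ factors uniquely through a fiber $A(u_\pi)$, producing an anchor map $\sigma\colon\widehat{A}\to G\unit$. When $s(\gamma)=\sigma(\pi)$, the map $\alpha_\gamma\colon A(s(\gamma))\to A(r(\gamma))$ is an isomorphism, so $\pi\circ\alpha_\gamma\inv$ is an irreducible representation of $A(r(\gamma))$ and the formula defines a map from the pullback $G\ast\widehat{A}=\{(\gamma,\pi):s(\gamma)=\sigma(\pi)\}$ into $\widehat{A}$ with $\sigma(\gamma\cdot\pi)=r(\gamma)$. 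The groupoid axioms are then immediate from $\alpha_u=\mathrm{id}_{A(u)}$ for $u\in G\unit$ and the cocycle identity $\alpha_{\gamma_1\gamma_2}=\alpha_{\gamma_1}\circ\alpha_{\gamma_2}$, which together give $u\cdot\pi=\pi$ and $(\gamma_1\gamma_2)\cdot\pi=\gamma_1\cdot(\gamma_2\cdot\pi)$.

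For continuity my plan is to use the standard characterization that a net $\pi_i\to\pi$ in $\widehat{A}$ iff $\|\pi(a)\|\le\liminf_i\|\pi_i(a)\|$ for every $a\in A$, equivalently iff the kernels converge in the Jacobson topology on $\operatorname{Prim}(A)$. The essential input is that $\alpha$ is continuous as an action on the bundle: for each section $a\in A=\Gamma_0(G\unit,\mcal{A})$, the map $\gamma\mapsto\alpha_\gamma\inv(a(r(\gamma)))$ is a continuous section of $s^*\mcal{A}$ over $G$. Given $(\gamma_i,\pi_i)\to(\gamma,\pi)$ in $G\ast\widehat{A}$ and a section $a\in A$, one chooses a section $c\in A$ that agrees with $\alpha_\gamma\inv(a(r(\gamma)))$ at the point $s(\gamma)$, and compares $\|\pi_i(c)\|$ with $\|\pi_i(\alpha_{\gamma_i}\inv(a(r(\gamma_i))))\|$ using the bundle convergence $\alpha_{\gamma_i}\inv(a(r(\gamma_i)))\to\alpha_\gamma\inv(a(r(\gamma)))$ in $\mcal{A}$.

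The main obstacle is exactly this comparison, because the elements $\alpha_{\gamma_i}\inv(a(r(\gamma_i)))$ live in the varying fibers $A(s(\gamma_i))$ rather than sitting inside $A$ itself, so they cannot be substituted directly into the spectrum-convergence criterion. To overcome this I would approximate the varying fiber-elements by a fixed section $c\in A$ as above and exploit upper semicontinuity of $u\mapsto\|b(u)\|$ for $b\in A$: the difference $\alpha_{\gamma_i}\inv(a(r(\gamma_i)))-c(s(\gamma_i))$ converges to $0$ in $\mcal{A}$, which by upper semicontinuity forces $\|\pi_i(\alpha_{\gamma_i}\inv(a(r(\gamma_i))))-\pi_i(c)\|\to 0$. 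Combining this with the lower semicontinuity half of the convergence criterion applied to $c$ gives $\|\pi(\alpha_\gamma\inv(a(r(\gamma))))\|=\|\pi(c)\|\le\liminf_i\|\pi_i(\alpha_{\gamma_i}\inv(a(r(\gamma_i))))\|$ for every $a\in A$, which is exactly the condition $\gamma_i\cdot\pi_i\to\gamma\cdot\pi$ in $\widehat{A}$.
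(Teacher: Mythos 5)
Your proposal is correct and follows essentially the same route as the paper: both reduce continuity to comparing the moving fibre elements $\alpha_{\gamma_i}^{-1}(a(r(\gamma_i)))$ with a fixed global section of $A$ via upper semicontinuity of the norm on the bundle, and then invoke lower semicontinuity of $\pi\mapsto\|\pi(a)\|$ on $\widehat{A}$. The only cosmetic difference is that you apply the $\liminf$ characterization of convergence in $\widehat{A}$ directly, whereas the paper runs the equivalent contradiction argument with the basic open sets $O_J$.
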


\begin{proof}
Since $A$ is a $C_0(G\unit)$-algebra it follows from \cite[Proposition
C.5]{tfb2} that there is a continuous map $r:\widehat{A}\rightarrow
G\unit$.  Furthermore, we view
$\widehat{A}$ as being fibred over $G\unit$ so that if
$\pi\in\widehat{A}$ with $r(\pi) = u$ then 
we can factor $\pi$ to a representation $\pi'$ of $A(u)$. Given
$\gamma\in G$ we know 
$\alpha_\gamma:A(s(\gamma))\rightarrow A(r(\gamma))$ so that if
$r(\pi) = s(\gamma)$ we can define $\gamma\cdot \pi\in \widehat{A}$ by
$\gamma\cdot \pi(a) = \pi'(\alpha_\gamma\inv(a(r(\gamma))))$.
Of course, when we factor $\gamma\cdot \pi$ to $A(r(\gamma))$ we get
$(\gamma\cdot \pi)' = \pi'\circ\alpha_\gamma\inv$ as desired.  
The difficult part in proving that this defines a groupoid
action is showing that it is continuous. 

Suppose
$\gamma_i \rightarrow \gamma$ and  $\pi_i\rightarrow \pi$ such that
$s(\gamma_i) = r(\pi_i)$ for all $i$ and $s(\gamma) = r(\pi)$.
Let $O_J = \{\rho\in\widehat{A}: J\not\subset \ker\rho\}$ be an open set in
$\widehat{A}$ containing $\gamma\cdot \pi$.  Suppose, to the contrary, 
that $\gamma_i\cdot
\pi_i$ is not eventually in $O_J$.  By passing to a subnet and
relabeling  we can assume $\gamma_i\cdot\pi_i\not\in O_J$ for all
$i$.  Fix $a\in J$ and choose $b\in A$ such that $b(s(\gamma)) =
\alpha_\gamma\inv(a(r(\gamma)))$.  Since the action is continuous,
$\alpha_{\gamma_i}\inv(a(r(\gamma_i))) \rightarrow b(s(\gamma))$.
Since the norm is upper-semicontinuous, the set
$\{ a\in \mcal{A} : \|a\| < \epsilon\}$ is open for all $\epsilon >
0$.  Because $\alpha_{\gamma_i}\inv(a(r(\gamma_i))) - b(s(\gamma_i))
\rightarrow 0$, we eventually have
$\|\alpha_{\gamma_i}\inv(a(r(\gamma_i))) - b(s(\gamma_i))\| <
\epsilon$ for all $\epsilon >0$.  Hence
$\|\alpha_{\gamma_i}\inv(a(r(\gamma_i))) - b(s(\gamma_i))\| \rightarrow
0.$
Next, $\gamma_i\cdot \pi_i\not\in O_J$ for all $i$ so that
$\gamma_i\cdot \pi_i(a) = \pi'(\alpha_{\gamma_i}\inv(a(r(\gamma_i)))) =
0$ for all $i$.  Thus
\begin{equation}
\|\pi_i(b)\| = \|\pi'_i(b(s(\gamma_i))
- \alpha_{\gamma_i}\inv(a(r(\gamma_i))))\| 
\leq \|b(s(\gamma_i))-\alpha_{\gamma_i}\inv(a(r(\gamma_i)))\|
\rightarrow 0. \label{eq:65}
\end{equation}
It is shown in \cite[Lemma A.30]{tfb} that the map
$\pi\mapsto\|\pi(b)\|$ is lower-semicontinuous on $\widehat{A}$.  
In other words, given $\epsilon \geq 0$ the set 
$\{ \rho\in \widehat{A} : \|\rho(b)\|\leq \epsilon \}$
is closed.  Thus \eqref{eq:65} implies
that eventually
$\pi_i\in\{\rho\in\widehat{A}:\|\rho(b)\|\leq\epsilon\}$.  Therefore, 
the fact that $\pi_i\rightarrow \pi$ implies $\|\pi(b)\|\leq
\epsilon$.  This is true for all $\epsilon > 0$ so that 
\[
0 = \pi(b) = \pi'(b(s(\gamma))) =
\pi'(\alpha_\gamma\inv(a(r(\gamma)))) = \gamma\cdot \pi(a).
\]
This is a contradiction since $a\in J$ was arbitrary and we assumed
that $\gamma\cdot \pi\in O_J$.  
\end{proof}
\subsection{Bundle Crossed Products}

An important class of groupoids are those for which the range and
source map are identical.  Such a space is called a (groupoid) group
bundle and we will use $p$ to denote both the range and the source.
The premier example of a groupoid group bundle is the stabilizer
subgroupoid $S$ of a groupoid $G$.  The reason this class of
groupoids is important for what follows is that crossed products by
group bundles have extra structure.  

\begin{prop}
\label{prop:65}
Suppose $(A,S,\alpha)$ is a groupoid dynamical system and
$S$ is a group bundle.  Then $A\rtimes_\alpha S$ is a
$C_0(S\unit)$-algebra with the action defined for $\phi\in
C_0(S\unit)$ and $f\in\Gamma_c(S,p^*\mcal{A})$ by 
$\phi\cdot f(s) := \phi(p(s))f(s)$.
Furthermore, the restriction map from 
$\Gamma_c(S,p^*\mcal{A})$ to $C_c(S_u,A(u))$ factors to an
isomorphism of $A\rtimes_\alpha S(u)$ onto
$A(u)\rtimes_{\alpha|_{S_u}} S_u$.  
\end{prop}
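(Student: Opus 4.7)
The proposition makes two claims: first, that the formula $\phi\cdot f(s)=\phi(p(s))f(s)$ defines a genuine $C_0(S\unit)$-algebra structure on $A\rtimes_\alpha S$; second, that the fiber of this algebra at $u$ is isomorphic via restriction to $A(u)\rtimes_{\alpha|_{S_u}}S_u$.

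For the first claim, I would verify directly on $\Gamma_c(S,p^*\mcal{A})$ that $(\phi,f)\mapsto (\phi\circ p)f$ is a central $*$-action of $C_0(S\unit)$. The key point is that because $S$ is a group bundle, any composable pair $s,t\in S$ satisfies $p(s)=p(t)=p(st)$, so $\phi\circ p$ pulls through both the convolution and the involution: $\phi\cdot(f*g)=(\phi\cdot f)*g=f*(\phi\cdot g)$ and $(\phi\cdot f)^*=\bar\phi\cdot f^*$. Composing any representation of $A\rtimes S$ with pointwise multiplication by $\phi$ then shows that $\phi\mapsto M_\phi$ extends to a nondegenerate $*$-homomorphism $C_0(S\unit)\to ZM(A\rtimes S)$, nondegeneracy following from compact supports combined with partitions of unity on $S\unit$.

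For the second claim, observe that the restriction $R_u:\Gamma_c(S,p^*\mcal{A})\to C_c(S_u,A(u))$ sending $f$ to $s\mapsto f(s)$ (viewed in $A(u)$ via the quotient at $u$) is a $*$-homomorphism, since the convolution and involution on $S$ restrict fiberwise. Its image is dense in $A(u)\rtimes_{\alpha|_{S_u}}S_u$ by standard extension results for upper-semicontinuous bundles, which lift any $h\in C_c(S_u,A(u))$ to a section of $p^*\mcal{A}$. Moreover, any covariant representation of $(A(u),S_u,\alpha|_{S_u})$ integrates to some $\sigma$, and $\sigma\circ R_u$ is a $*$-representation of $\Gamma_c(S,p^*\mcal{A})$, so $R_u$ is bounded for the universal norm and extends to a homomorphism $A\rtimes S\to A(u)\rtimes S_u$. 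Since $R_u(\phi\cdot f)=\phi(u)R_u(f)$ for $\phi\in C_0(S\unit)$, the extension vanishes on the fiber ideal $I_u:=\overline{C_0(S\unit\setminus\{u\})\cdot(A\rtimes S)}$ and therefore factors through the fiber $(A\rtimes S)(u)=(A\rtimes S)/I_u$.

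The main obstacle is showing that the induced map $\bar R_u:(A\rtimes S)(u)\to A(u)\rtimes S_u$ is isometric; surjectivity follows from the density above, and one norm inequality is automatic. For the reverse inequality, I would disintegrate an arbitrary representation $\pi$ of $A\rtimes S$ that vanishes on $I_u$ into a covariant pair $(\pi_A,V)$. Vanishing on $I_u$ forces $\pi_A$ to factor through $A\to A(u)$, and the covariance relation $V_s\pi_A(\,\cdot\,)V_s^*=\pi_A(\alpha_s(\,\cdot\,))$ then confines the nontrivial part of $V$ to $S_u$. The resulting covariant pair for $(A(u),S_u,\alpha|_{S_u})$ has integrated form $\sigma$ satisfying $\pi(f)=\sigma(R_u(f))$, giving $\|\pi(f)\|\leq\|\bar R_u([f])\|$. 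Taking the supremum over all such $\pi$ yields the required isometry.
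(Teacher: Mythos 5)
Your proposal is correct in substance but follows a genuinely different route from the paper on both halves. For the $C_0(S\unit)$-structure, the paper stays entirely at the Hilbert-module level: it checks that $\Phi(\phi)$ is $A\rtimes S$-linear and adjointable with adjoint $\Phi(\overline{\phi})$, and then obtains boundedness algebraically by factoring the positive element $\|\phi\|_\infty^2 1-\overline{\phi}\phi=\xi^*\xi$ in $C_0(S\unit)^1$ and computing $\langle\Phi(\xi)f,\Phi(\xi)f\rangle\geq 0$. You instead appeal to representations and ``pointwise multiplication by $\phi$''; be aware that to make this non-circular you must actually disintegrate each representation over a measure on $S\unit$ so that $\pi(\phi\cdot f)=M_\phi\pi(f)$ with $\|M_\phi\|\leq\|\phi\|_\infty$ --- defining $M_\phi$ on $\pi(\Gamma_c)\mcal{H}$ directly presupposes the very norm estimate you are after. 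The paper's positivity trick avoids the Disintegration Theorem here entirely. For the fiber identification, the paper does not re-derive the isomorphism: it observes that $\{u\}$ is a closed $S$-invariant set, invokes the restriction theorem from the first paper of the series (restriction factors to an isomorphism of $A\rtimes S/\Ex(O)$ onto $A(u)\rtimes S_u$ for $O=S\unit\setminus\{u\}$), and reduces the whole statement to the equality of ideals $I_u=\Ex(O)$, proved by an $\epsilon$-approximation with cutoff functions in $C_c(S\unit)$. Your argument --- extend sections, integrate covariant pairs to get boundedness of $R_u$, then disintegrate a representation killing $I_u$ to see its measure is $\delta_u$ and obtain the reverse inequality --- is essentially a self-contained re-proof of that cited theorem in the one-point case. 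It is valid and has the virtue of independence from the earlier paper, at the cost of a second use of the Disintegration Theorem where the paper needs only the ideal computation.
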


\begin{proof}
Given $\phi\in C_0(S\unit)$ and $f\in \Gamma_c(S,p^*\mcal{A})$ define
$\Phi(\phi)f=\phi\cdot f$ as in the statement of the proposition.  
It is easy to see
that $\Phi(\phi)f\in \Gamma_c(S,p^*\mcal{A})$ and that $\Phi(\phi)$ is
linear as a function on $\Gamma_c(S,p^*\mcal{A})$. We need to extend
$\Phi(\phi)$ to an element of the multiplier algebra. First, simple
calculations show that, on $\Gamma_c(S,p^*\mcal{A})$, 
$\Phi(\phi)$ is $A\rtimes S$-linear and is adjointable with adjoint
$\Phi(\overline{\phi})$.  

Now extend $\Phi$ to the unitization $C_0(S\unit)^1$ by setting
$\Phi(\phi+\lambda 1)f = \Phi(\phi)f + \lambda f$.  An elementary
computation shows that $\Phi$ preserves the operations on
$C_0(S\unit)^1$.
Suppose $\phi\in C_0(G\unit)$ and $f\in\Gamma_c(S,p^*\mcal{A})$. In
order to show $\Phi(\phi)$ is bounded it will suffice 
to show that
$\langle \phi\cdot f, \phi\cdot f \rangle \leq \|\phi\|_\infty^2 \langle f,
f\rangle$.  However, this is equivalent to proving
\[
0 \leq \|\phi\|_\infty^2 \langle f, f \rangle - \langle
\Phi(\phi)f,\Phi(\phi)f\rangle
= \langle \Phi(\|\phi\|_\infty^2 1 - \overline{\phi}\phi)f,f\rangle.
\]
Since general $C^*$-algebraic nonsense assures us that $\|\phi\|_\infty^2 1 -
\overline{\phi}\phi$ is positive in $C_0(S\unit)^1$, it follows
there is some  $\xi \in C_0(S\unit)^1$ such that $\xi^*\xi =
\|\phi\|_\infty^2 1 - \overline{\phi}\phi$.  We now compute
\[
\langle \Phi(\|\phi\|_\infty^2 1 - \overline{\phi}\phi)f,f\rangle  = 
\langle \Phi(\xi^*)\Phi(\xi) f, f\rangle  
= \langle \Phi(\xi)f,\Phi(\xi)f\rangle \geq 0
\]
Hence $\Phi(\phi)$ is bounded and extends to a multiplier on
$A\rtimes S$.  Furthermore, simple calculations show that
$\Phi$ is a nondegenerate
homomorphism from $C_0(S\unit)$ into the center of the multiplier
algebra of $A\rtimes S$.  Thus $A\rtimes S$ is a
$C_0(S\unit)$-algebra.

Let us now address the second part of the proposition.  Fix $u\in
S\unit$ and recall that $A\rtimes S(u) = A\rtimes S/I_u$ where 
\[
I_u = \cspn\{\phi \cdot a : \phi\in C_0(S\unit),a\in A\rtimes S,
\phi(u) = 0\}.
\]
Next, observe that $S$ acts trivially on its unit space so that
$\{u\}$ is a closed $S$-invariant subset in $S\unit$ and $O =
S\unit\setminus \{u\}$ is an open $S$-invariant subset.  It follows from
\cite[Theorem 3.3]{inducpaper} that restriction factors to an
isomorphism from $A\rtimes S/\Ex(O)$ onto $A(u)\rtimes S_u$.  Thus we
will be done if we can show that $I_u = \Ex(O) =
\{f\in\Gamma_c(S,p^*\mcal{A}): \supp f \subset S\setminus S_u\}$.
Given $f\in \Ex(O)$ let $\phi\in C_c(S\unit)$ be zero on $u$ and one
on $p(\supp f)$.  Then $\phi\cdot f = f\in I_u$ and $I_u\subset \Ex(O)$.
Now suppose $f\in I_u$.  Given $\epsilon > 0$ the set $K=\{s : \|f(s)\|
\geq \epsilon\}$ is a compact subset of $\supp f$ and as such we can
find $\phi\in C_c(S\unit)$ such that $\phi$ is one on $p(K)$, zero on
a neighborhood of $u$, and $0\leq \phi\leq 1$.  
It follows quickly that $\phi\cdot f \in
\Ex(O)$ and that $\|\phi\cdot f - f \| < \epsilon$.  Since $\epsilon$
was arbitrary, this is enough to show that $\Ex(O)\subset I_u$.  
\end{proof}

\begin{remark}
One important consequence of Proposition \ref{prop:65} is that the
irreducible representations of $A\rtimes S$ are well behaved.  To
elaborate, \cite[Proposition C.6]{tfb2} states that, as a set,
the spectrum $(A\rtimes S)\sidehat$ can be identified with the
disjoint union $\coprod_{u\in S_u} (A(u)\rtimes S_u)\sidehat$.  In
other words, every irreducible representation of the crossed product
$A\rtimes S$ is lifted from an irreducible covariant
representation of the group crossed product $A(u)\rtimes S_u$ for some
$u\in S\unit$ via restriction on $\Gamma_c(S,p^*\mcal{A})$.  
This fact is at the heart of the analysis in Section \ref{sec:cross-prod-with}.
\end{remark}

We finish this section with a technical lemma.  Recall that given a
$C_0(X)$-algebra $A$ with associated usc-bundle $\mcal{A}$ 
and a locally compact Hausdorff subset $Y\subset
X$ we define $A(Y) := \Gamma_0(Y,\mcal{A})$.

\begin{lemma}
\label{prop:5}
Suppose $(A,S,\alpha)$ is a groupoid dynamical system, $S$ is a group
bundle and $C$ is a closed subset of $S\unit$.  Then $A\rtimes_\alpha
S(C)$ and $A(C)\rtimes_\alpha S|_C$ are isomorphic as
$C_0(C)$-algebras. 
\end{lemma}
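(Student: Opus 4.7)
The plan is to reduce the statement to an application of \cite[Theorem 3.3]{inducpaper} by generalizing the argument in the second half of Proposition \ref{prop:65} from a singleton to the closed set $C$. By Proposition \ref{prop:65}, $A\rtimes_\alpha S$ is a $C_0(S\unit)$-algebra, so $A\rtimes_\alpha S(C) = (A\rtimes S)/I_C$ where
\[
I_C = \cspn\{\phi \cdot a : \phi \in C_0(S\unit),\ a \in A\rtimes S,\ \phi|_C \equiv 0\}.
\]
Because $S$ acts trivially on its unit space, $O := S\unit\setminus C$ is an open $S$-invariant subset, and \cite[Theorem 3.3]{inducpaper} identifies $(A\rtimes S)/\Ex(O)$ with $A(C)\rtimes_\alpha S|_C$ via restriction of sections. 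So the task reduces to showing $I_C = \Ex(O)$ and verifying that the resulting identification is $C_0(C)$-linear.

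For $\Ex(O) \subset I_C$ I would take a dense generator $f \in \Gamma_c(S,p^*\mcal{A})$ of $\Ex(O)$ with $\supp f \subset p^{-1}(O)$; since $p(\supp f)$ is compact in $O$ and disjoint from $C$, Urysohn's lemma furnishes $\phi \in C_c(S\unit)$ equal to $1$ on $p(\supp f)$ and $0$ on $C$, so $f = \phi \cdot f \in I_C$, and the inclusion follows by taking closures. For the reverse inclusion, continuity of the $C_0(S\unit)$-action lets me reduce to showing that each generator $\phi \cdot a$ with $\phi|_C \equiv 0$ and $a \in \Gamma_c(S,p^*\mcal{A})$ lies in $\Ex(O)$. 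Setting $g := \phi \cdot a$, I observe that $g(s) = 0$ whenever $p(s) \in C$, so for each $\epsilon > 0$ the compact set $K_\epsilon := \{s : \|g(s)\| \geq \epsilon\}$ has $p(K_\epsilon)$ disjoint from $C$. A second application of Urysohn provides $\psi \in C_c(S\unit)$ equal to $1$ on $p(K_\epsilon)$ and vanishing on a neighborhood of $C$; then $\psi \cdot g$ is supported in $p^{-1}(O)$ and hence lies in $\Ex(O)$, while a fiberwise estimate bounds $\|\psi \cdot g - g\|$ in the inductive limit (and therefore the $C^*$) norm by a constant multiple of $\epsilon$. Since $\Ex(O)$ is closed, this forces $g \in \Ex(O)$.

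Finally, the $C_0(C)$-linearity of the resulting isomorphism is automatic, as both module actions are implemented by pointwise multiplication on sections and commute with restriction. The main technical obstacle is the fiberwise norm estimate in the second inclusion, but it is essentially routine: off $K_\epsilon$ the pointwise error is at most $\epsilon$, and the common compact support of $g$ provides the uniform fiber control needed to conclude in the inductive limit norm.
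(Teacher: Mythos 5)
Your proof is correct and follows essentially the same route as the paper: both reduce the statement to \cite[Theorem 3.3]{inducpaper} together with the identification $I_C = \Ex(O)$, established by the same Urysohn-function approximation argument used for the singleton case in Proposition \ref{prop:65}, with $C_0(C)$-linearity checked at the end. The only difference is that you spell out the approximation argument that the paper dismisses as ``similar to the previous proposition,'' and your version of it (working with generators $\phi\cdot a$ for $a\in\Gamma_c(S,p^*\mcal{A})$ and the compact sets $K_\epsilon$) is sound.
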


\begin{proof}
Since the action of $S$ on its unit space is trivial, both $C$ and $U =
S\unit \setminus C$ are $S$-invariant subsets.  It follows from
\cite[Theorem 3.3]{inducpaper} that restriction factors to an 
isomorphism $\bar{\rho}_1$ of 
$A\rtimes S/\Ex(U)$ onto $A(C)\rtimes S|_C$.  Now let 
\[
I_C = \cspn \{\phi\cdot f : \phi\in C_0(S\unit),
f\in\Gamma_c(S,p^*\mcal{A}), \phi(C)=0 \}.
\]
It follows from some basic $C_0(X)$-algebra theory that the
restriction map $\rho_2:A\rtimes S\rightarrow A\rtimes S(C)$, where we view
both spaces as section algebras of the usc-bundle associated to $A\rtimes
S$, factors to an isomorphism $\bar{\rho}_2:A\rtimes
S/I_C\rightarrow A\rtimes S(C)$.  Similar to the previous proposition, an
approximation argument shows that $I_C=\Ex(U)$, and therefore
we may form the isomorphism $\rho = \bar{\rho}_2 \circ \bar{\rho}_1\inv$ 
of $A(C)\rtimes S|_C$ onto $A\rtimes S(C)$.   The fact that $\rho$ is
$C_0(C)$-linear then follows from a straightforward calculation.  
\end{proof}

\section{Groupoid Crossed Products}
\label{sec:cross-prod-with}

As mentioned in the introduction, we aim to identify the spectrum of
groupoid crossed products via induction and the stabilizer
subgroupoid.  The key to this construction is the following map, which
we will eventually factor to a homeomorphism.  

\begin{prop}
\label{prop:1}
Suppose $(A,G,\alpha)$ is a groupoid dynamical system, that $G$ is 
regular, and that the
isotropy groupoid $S$ has a Haar system.  Then $\Phi:(A\rtimes S)\sidehat \rightarrow (A\rtimes
G)\sidehat$ given by $\Phi(R) = \Ind_S^G R$ is a continuous
surjection. 
\end{prop}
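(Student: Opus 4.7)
The plan is to establish surjectivity and continuity separately.

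For surjectivity, the key input is the main result of \cite{inducpaper}: since $G$ is regular, every irreducible representation of $A\rtimes G$ has the form $\Ind_{S_u}^G \sigma$ for some $u\in S\unit$ and some $\sigma\in (A(u)\rtimes_{\alpha|_{S_u}}S_u)\sidehat$. By Proposition \ref{prop:65} and the remark following it, such a $\sigma$ lifts through the restriction quotient $A\rtimes S\to A(u)\rtimes S_u$ to a unique irreducible representation $R$ of $A\rtimes S$ supported at $u$. It then remains to check that $\Phi(R)=\Ind_S^G R$ agrees with $\Ind_{S_u}^G\sigma$, which is an instance of induction in stages along $S_u\hookrightarrow S\hookrightarrow G$. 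Since the embedding of $A(u)\rtimes S_u$ into $A\rtimes S$ is precisely the restriction quotient of Proposition \ref{prop:65}, unwinding the definitions of the induction modules should show that the two constructions agree on the dense subspace $\Gamma_c(G,r^*\mcal{A})$ and hence after completion.

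For continuity, the natural approach is via Rieffel's induction theory. The induction process $\Ind_S^G$ is implemented by a right Hilbert $(A\rtimes S)$-module $\mcal{X}_S^G$, built from $\Gamma_c(G,r^*\mcal{A})$ in \cite{inducpaper}, that carries a left action of $A\rtimes G$ by adjointable operators. For any such $C^*$-correspondence the induced map on spectra is continuous with respect to the Jacobson topology, and this can be verified directly by showing that the preimage under $\Phi$ of a basic open set $O_J=\{\rho\in (A\rtimes G)\sidehat : J\not\subset \ker\rho\}$ is again a basic open set, namely the one determined by the ideal of $A\rtimes S$ Rieffel-induced from $J$. Concretely, $J\not\subset \ker\Ind_S^G R$ translates into the statement that $J\cdot \mcal{X}_S^G$ is not annihilated by $R$, which is a hull-kernel open condition on $R$.

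The principal obstacle is the continuity step: fitting the concrete induction module of \cite{inducpaper} into Rieffel's framework requires verifying appropriate non-degeneracy and confirming that the Rieffel ideal correspondence behaves correctly in the presence of an upper-semicontinuous bundle over $G\unit$ and a non-principal isotropy groupoid. By contrast, surjectivity is essentially a bookkeeping exercise, since the regularity theorem of \cite{inducpaper} together with the decomposition of $(A\rtimes S)\sidehat$ furnished by Proposition \ref{prop:65} already do most of the work; the only real content left is the induction-in-stages identification.
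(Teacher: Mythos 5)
Your overall strategy---surjectivity from the regularity theorem of \cite{inducpaper} together with an identification of $\Ind_{S_u}^G\sigma$ with $\Ind_S^G$ of its lift through the restriction quotient, and continuity from the general theory of Rieffel induction---is exactly the route the paper takes. The identification you describe is the paper's Lemma \ref{lem:1b}, proved by the mechanism you sketch: the restriction map $\Gamma_c(G,s^*\mcal{A})\rightarrow C_c(G_u,A(u))$ induces a unitary between the two induction modules tensored with $\mcal{H}$. (One quibble: calling this ``induction in stages'' is a mischaracterization, since $A(u)\rtimes S_u$ is a quotient of $A\rtimes S$, not a subalgebra; what must be checked is compatibility of induction with lifting a representation through the restriction quotient, which is what your dense-subspace computation actually does.) For continuity the paper simply cites the standard fact that Rieffel induction is continuous; your more hands-on verification via the ideal correspondence is a legitimate alternative but adds no generality here.

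There is, however, a genuine gap: you never verify that $\Phi$ is well defined, that is, that $\Ind_S^G R$ is \emph{irreducible} for every irreducible representation $R$ of $A\rtimes S$, so that $\Phi$ actually takes values in $(A\rtimes G)\sidehat$ rather than in some larger space of representations. Your surjectivity argument runs in the opposite direction---every irreducible representation of $A\rtimes G$ is induced from some stabilizer---and this does not preclude the possibility that other induced representations are reducible and hence lie outside the spectrum entirely. This is the second place where the regularity hypothesis is essential: the paper invokes \cite[Proposition 4.13]{inducpaper}, which guarantees that for regular $G$ induction from a stabilizer crossed product carries irreducible representations to irreducible representations. Without this step the codomain of $\Phi$ is wrong and the statement of the proposition does not even parse; you should add it explicitly before addressing surjectivity and continuity.
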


Recall that $A\rtimes S$ is a $C_0(G\unit)$-algebra and that
restriction factors to an isomorphism of $A\rtimes S(u)$ with
$A(u)\rtimes S_u$.  The main difficulty is showing that induction
respects this fibring. 

\begin{lemma}
\label{lem:1b}
Suppose $(A,G,\alpha)$ is a groupoid dynamical system and that the
stabilizer subgroupoid $S$ has a Haar system.  Given $u\in G\unit$ and a
representation $R$ of $A(u)\rtimes S_u$ let $\rho:A\rtimes S
\rightarrow A(u)\rtimes S_u$ be given on $\Gamma_c(S,p^*\mcal{A})$ by
restriction.  Then $\Ind_{S_u}^G R$ is naturally equivalent to
$\Ind_S^G(R\circ \rho)$.  
\end{lemma}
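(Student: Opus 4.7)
The plan is to realize both induced representations on concrete pre-Hilbert spaces and to construct a unitary intertwiner via restriction of sections from $G$ to the $s$-fibre $G_u := s\inv(u)$.

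Recall from \cite{inducpaper} that $\Ind_S^G(R\circ\rho)$ is realized as the completion of $\Gamma_c(G,r^*\mcal{A})\odot H_R$ under the pre-inner product
\begin{equation*}
\langle f\otimes v, g\otimes w\rangle = \langle R(\rho(\langle f,g\rangle_{A\rtimes S}))v, w\rangle_{H_R},
\end{equation*}
where $\langle f,g\rangle_{A\rtimes S}\in\Gamma_c(S,p^*\mcal{A})$ is a convolution-type section built from the Haar system on $G$.  The representation $\Ind_{S_u}^G R$ has an analogous description starting from $\Gamma_c(G_u,r^*\mcal{A}|_{G_u})$ with an $A(u)\rtimes S_u$-valued inner product built from the Haar measure on $S_u$.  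The key identity I would establish is
\begin{equation*}
\rho\bigl(\langle f,g\rangle_{A\rtimes S}\bigr) = \langle f|_{G_u}, g|_{G_u}\rangle_{A(u)\rtimes S_u}
\end{equation*}
for all $f,g\in\Gamma_c(G,r^*\mcal{A})$.  By Proposition \ref{prop:65}, $\rho$ is restriction of sections from $S$ to $S_u$, and at a point $s\in S_u$ the integrand defining $\langle f,g\rangle_{A\rtimes S}(s)$ only sees $\gamma\in G$ with $s(\gamma)=u$, so its value already depends solely on $f|_{G_u}$ and $g|_{G_u}$.

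Applying $R$ to both sides then shows that the map $\mathrm{res}: f\otimes v\mapsto f|_{G_u}\otimes v$ is an isometry from the pre-Hilbert space of $\Ind_S^G(R\circ\rho)$ into that of $\Ind_{S_u}^G R$.  Density of its image follows from a Tietze-type extension argument for the upper-semicontinuous bundle $r^*\mcal{A}$ over the closed subset $G_u\subset G$, so $\mathrm{res}$ extends to a unitary between the completions.  Finally, this unitary intertwines the $A\rtimes G$-actions, because the left convolution action of $\Gamma_c(G,r^*\mcal{A})$ commutes with restriction to $G_u$: the integral defining $(\phi*f)(\gamma)$ at $\gamma\in G_u$ only involves values of $f$ over $G_u$.

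The hard part will be verifying the inner product identity in full detail; this requires careful tracking of how the Haar system on $G$ decomposes along the $s$-fibres and uses that the Haar system on $S_u$ (guaranteed by the hypothesis that $S$ has a Haar system) is exactly the one inherited from $S$.  Once the identity is in hand, the density and intertwining steps are routine bundle-theoretic computations.
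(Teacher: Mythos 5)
Your proposal follows essentially the same route as the paper: both realize the two induced representations on completions of section algebras, define the intertwiner on elementary tensors by restricting sections to the fibre $G_u$, and reduce everything to the observation that the $A\rtimes S$-valued inner product evaluated at $s\in S_u$ depends only on the restrictions to $G_u$, so that the restriction map is isometric with dense range and intertwines the left actions. The only discrepancy is notational: the paper builds the module $\mcal{Z}_S^G$ from $\Gamma_c(G,s^*\mcal{A})$ rather than $\Gamma_c(G,r^*\mcal{A})$ (so that restriction to $G_u$ lands in $C_c(G_u,A(u))$), but this does not affect the substance of the argument.
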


\begin{proof}
The proof of this lemma is relatively straightforward so we shall
limit ourselves to sketching an outline.  Fix $u\in G\unit$ and
suppose $R$ is a representation of $A(u)\rtimes S_u$ on $\mcal{H}$.  Recall from
\cite[Theorem 2.1]{inducpaper} that $\Ind_{S_u}^G R$ acts on the Hilbert tensor
product $\mcal{Z}_{S_u}^G \otimes_{A(u)\rtimes S_u} \mcal{H}$ where
$\mcal{Z}_{S_u}^G$ is a Hilbert $A(u)\rtimes S_u$-module.
Furthermore, recall that $\mcal{Z}_{S_u}^G$ is a completion of
$C_c(G_u,A(u))$.  Similarly $\Ind_S^G(R\circ\rho)$ acts on
$\mcal{Z}_S^G\otimes_{A\rtimes S}\mcal{H}$ where the Hilbert $A\rtimes
S$-module $\mcal{Z}_S^G$ is a completion of
$\Gamma_c(G,s^*\mcal{A})$.  Let
$\pi:\Gamma_c(G,s^*\mcal{A})\rightarrow C_c(G_u,A(u))$ be given by
restriction.  We now define $U:\Gamma_c(G,s^*\mcal{A})\odot
\mcal{H}\rightarrow C_c(G_u,A(u))\odot \mcal{H}$ on elementary tensors
by $U(f\otimes h) = \pi(f)\otimes h$.  It then follows from some
relatively painless calculations that $U$ is isometric and extends to a
unitary map from $\mcal{Z}_S^G\otimes_{A\rtimes S}\mcal{H}$ onto
$\mcal{Z}_{S_u}^G\otimes_{A(u)\rtimes S_u}\mcal{H}$ which intertwines
$\Ind_{S_u}^G R$ and $\Ind_S^G(R\circ\rho)$.  
\end{proof}

\begin{remark}
\label{rem:1}
In light of how natural the unitary intertwining $\Ind_{S_u}^G R$ and
${\Ind_S^G(R\circ \rho)}$ is, we shall often confuse the two.  Furthermore,
since every irreducible representation of $A\rtimes S$ is lifted from a
fibre via restriction, we will feel free to use the notation $\Ind_S^G
R$ even when $R$ is an irreducible representation of $A(u)\rtimes
S_u$ and will interpret $\Ind_S^G R$ as either $\Ind_{S_u}^G R$ or
$\Ind_S^G(R\circ \rho)$ as we see fit.  We trust the reader will
forgive the author for these abuses.  
\end{remark}

The advantage of viewing the induction as occurring on $S$ is that
induction from a fixed algebra is a continuous process. 

\begin{proof}[Proof of Proposition \ref{prop:1}]
As noted above, every irreducible representation of $A\rtimes S$ 
is of the form $R\circ
\rho$ where $R$ is an irreducible representation of $A(u)\rtimes S_u$
for some $u\in G\unit$ and $\rho$ is the canonical extension of the
restriction map.   Since $G$ is regular, we know from
\cite[Proposition 4.13]{inducpaper} that $\Ind_S^G R$ is irreducible.  Thus
$\Phi$ is well defined.  The surjectivity follows immediately
from \cite[Theorem 4.1]{inducpaper}, and the continuity follows from
the fact that Rieffel induction is a continuous process
\cite[Corollary 3.35]{tfb}.
\end{proof}

\subsection{Groupoid Actions}

The goal of this section is to lay groundwork for establishing
the equivalence relation on $(A\rtimes S)\sidehat$ induced by $\Phi$.

\begin{prop}
Suppose $G$ is a locally compact groupoid and that
the isotropy subgroupoid $S$ has a Haar system.  
Then there is a continuous homomorphism $\omega$ from
$G$ to $\R^+$ such that for all $f\in C_c(S)$
\begin{equation}
\label{eq:2b}
\int_S f(s)d\beta^{r(\gamma)}(s) = \omega(\gamma)\int_S f(\gamma s
\gamma\inv)d\beta^{s(\gamma)}(s).
\end{equation}
Furthermore, given $s\in S$ we have $\omega(s) = \Delta^u(s)\inv$ where
$\Delta^u$ is the modular function for the group $S_u$.  
\end{prop}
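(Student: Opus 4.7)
My plan is to define $\omega(\gamma)$ pointwise using uniqueness of Haar measure, then verify the cocycle identity and the formula on $S$, leaving continuity---the main technical step---for last.

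For each $\gamma\in G$ with $u=s(\gamma)$ and $v=r(\gamma)$, conjugation $c_\gamma\colon S_u\to S_v$, $s\mapsto\gamma s\gamma\inv$, is a topological group isomorphism, so the pushforward $(c_\gamma)_*\beta^u$ is a left Haar measure on $S_v$. Since left Haar measure on a locally compact group is unique up to a positive scalar, there is a unique $\omega(\gamma)>0$ with $\beta^v=\omega(\gamma)(c_\gamma)_*\beta^u$, which is exactly \eqref{eq:2b}. For composable $\gamma,\eta\in G$, $c_{\gamma\eta}=c_\gamma\circ c_\eta$, and the same uniqueness yields $\omega(\gamma\eta)=\omega(\gamma)\omega(\eta)$. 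For $h\in S_u$, $c_h$ is the inner automorphism of $S_u$, and the standard identity $\int f(hxh\inv)\,d\beta^u(x)=\Delta^u(h)\int f\,d\beta^u$ (a change of variables using left invariance together with the defining property of $\Delta^u$) forces $\omega(h)=\Delta^u(h)\inv$.

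To establish continuity at a fixed $\gamma_0\in G$ I would pick a nonnegative $f\in C_c(S)$ with $\int f\,d\beta^{r(\gamma_0)}>0$. The Haar system axiom makes $\gamma\mapsto\int f\,d\beta^{r(\gamma)}$ continuous, hence nonzero on some open neighborhood $U$ of $\gamma_0$, and rearranging \eqref{eq:2b} gives
\[
\omega(\gamma)=\frac{\int_S f(s)\,d\beta^{r(\gamma)}(s)}{\int_S f(\gamma s\gamma\inv)\,d\beta^{s(\gamma)}(s)}\qquad(\gamma\in U),
\]
so the problem reduces to continuity of the denominator on $U$.

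The principal obstacle is that $s\mapsto f(\gamma s\gamma\inv)$ is supported in the varying fibre $S_{s(\gamma)}$ and has no a priori uniformly compact support in $S$ as $\gamma$ varies. To get around this I would shrink $U$ to be relatively compact. The continuity of the conjugation operation then makes
\[
L=\{\gamma\inv t\gamma:\gamma\in\overline{U},\ t\in\supp f,\ r(\gamma)=r(t)\}
\]
a compact subset of $S$ containing $\supp\bigl(s\mapsto f(\gamma s\gamma\inv)\bigr)$ for every $\gamma\in U$. Choosing $\psi\in C_c(S)$ with $\psi\equiv 1$ on $L$, the denominator becomes $\int_S\psi(s)f(\gamma s\gamma\inv)\,d\beta^{s(\gamma)}(s)$, and the integrand, viewed as a function of $(\gamma,s)$ on the fibred product $\{(\gamma,s)\in G\times S:s(\gamma)=r(s)\}$, is continuous and globally compactly supported. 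Its continuity in $\gamma$ is then the routine consequence of the Haar system axiom: any such function can be approximated uniformly on its support by finite sums $\sum_i\phi_i(\gamma)F_i(s)$ with $\phi_i\in C_c(G)$ and $F_i\in C_c(S)$, via a partition-of-unity argument, and for each elementary product the integral $\phi_i(\gamma)\int F_i\,d\beta^{s(\gamma)}$ is continuous by hypothesis on $\beta$.
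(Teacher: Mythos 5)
Your proof is correct and follows essentially the same route as the paper, which simply defers the construction of $\omega$ and its continuity to the cited lemma (uniqueness of Haar measure on the fibres of $S$ together with the continuity axiom for the Haar system) and only writes out the computation showing $\omega(s)=\Delta^u(s)^{-1}$ for $s\in S_u$ --- a computation identical to yours. The only difference is that you supply in full the compact-support/cutoff argument for continuity that the paper leaves to the reference; there is no gap.
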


\begin{proof}
By and large this is proved in the same way as \cite[Lemma
4.1]{ctgIII}.  The only difference is that the stabilizer subgroupoid
$S$ may not be abelian and that, rather than being
$S$-invariant, $\omega(s) = \Delta^u(s)\inv$ for all $s\in S_u$.  This is
shown by the following calculation for $s\in S_{u}$ and $f\in C_c(S)$
\begin{align*}
\omega(s)\inv \int_S f(t)d\beta^u(t) &= \int_S
f(sts\inv)d\beta^{u}(t)  = 
\int_S f(ts\inv)d\beta^u(t) \\
&= \Delta^u(s) \int f(t)d\beta^u(t). 
\end{align*}
Since the remainder of the proof is identical to that of \cite[Lemma
4.1]{ctgIII} we will not reproduce it here.  
\end{proof}

Next we demonstrate the following 
construction which, although we only make use of it
indirectly, is interesting in its own right. 

\begin{prop}
\label{prop:2}
Suppose $(A,G,\alpha)$ is a groupoid dynamical system and that the
isotropy subgroupoid $S$ has a Haar system.  
Then there is an action $\delta$
of $G$ on $A\rtimes_\alpha S$ defined by the collection
$\{\delta_\gamma\}_{\gamma\in G}$ where, for $f\in
  C_c(S_{s(\gamma)},A(s(\gamma)))$, 
\begin{equation}
\label{eq:1}
\delta_\gamma(f)(s) = \omega(\gamma)\inv \alpha_\gamma(f(\gamma\inv s
\gamma)).
\end{equation}
\end{prop}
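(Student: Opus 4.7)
The plan is to first verify the formula makes sense at the level of the dense subalgebras $C_c(S_u, A(u)) \subset A\rtimes S(u)$ (which by Proposition \ref{prop:65} we identify with $A(u)\rtimes S_u$), then check directly that each $\delta_\gamma$ is a $*$-algebra homomorphism on this level with inverse $\delta_{\gamma\inv}$, extend by universality to an isomorphism of fibres $A\rtimes S(s(\gamma)) \to A\rtimes S(r(\gamma))$, and finally verify continuity to conclude that $\{\delta_\gamma\}$ is a bona fide groupoid action in the sense of \cite[Definition 4.1]{renaultequiv}.

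For well-definedness, note that for $s\in S_{r(\gamma)}$ the element $\gamma\inv s\gamma$ lies in $S_{s(\gamma)}$, so $f(\gamma\inv s\gamma) \in A(s(\gamma))$ and $\alpha_\gamma$ carries this into $A(r(\gamma))$; continuity of $\alpha$ and conjugation, together with the compactness of $\gamma(\supp f)\gamma\inv$, give $\delta_\gamma(f) \in C_c(S_{r(\gamma)},A(r(\gamma)))$. For multiplicativity, one computes $\delta_\gamma(f*g)(s)$ directly, then applies the change-of-variables formula \eqref{eq:2b} with substitution $t \mapsto \gamma\inv t \gamma$; the Jacobian $\omega(\gamma)$ combines with the two prefactors $\omega(\gamma)\inv$ from $\delta_\gamma$ to match $(\delta_\gamma f) * (\delta_\gamma g)$, using $\alpha_\gamma \alpha_{\gamma\inv t \gamma} = \alpha_t \alpha_\gamma$. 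Adjoint preservation is the only place where the identification $\omega(s) = \Delta^u(s)\inv$ for $s \in S_u$ really matters: applying it to both $s$ and $\gamma\inv s\gamma$ in the definition $f^*(s) = \Delta(s)\inv \alpha_s(f(s\inv)^*)$ produces exactly the modular factor needed to equate $\delta_\gamma(f^*)$ with $\delta_\gamma(f)^*$.

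Next, the cocycle identity $\delta_{\gamma\eta} = \delta_\gamma \circ \delta_\eta$ at the $C_c$-level is immediate from $\omega$ being a homomorphism, $\alpha_{\gamma\eta} = \alpha_\gamma\alpha_\eta$, and $(\gamma\eta)\inv s (\gamma\eta) = \eta\inv(\gamma\inv s\gamma)\eta$; and $\delta_u = \mathrm{id}$ for $u \in G\unit$ is visible directly. Consequently $\delta_\gamma$ is invertible with inverse $\delta_{\gamma\inv}$, and since a $*$-homomorphism between dense $*$-subalgebras of $C^*$-algebras that is bijective onto a dense $*$-subalgebra extends by the universal property to a $C^*$-isomorphism, we get $\delta_\gamma: A\rtimes S(s(\gamma)) \to A\rtimes S(r(\gamma))$.

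The main obstacle is continuity: we must show that $(\gamma,a) \mapsto \delta_\gamma(a)$ is continuous as a map between the pullback bundles $s^*(\mcal{A}\rtimes S) \to r^*(\mcal{A}\rtimes S)$. The standard way is to produce a sufficient family of continuous sections of $\mcal{A}\rtimes S$ whose images under $\delta$ are also continuous. Given $f \in \Gamma_c(S,p^*\mcal{A})$, one defines $\tilde f(\gamma)(s) := \omega(\gamma)\inv \alpha_\gamma(f(\gamma\inv s\gamma))$ for $s \in S_{r(\gamma)}$; continuity of $\omega$, of the groupoid action $\alpha$, and of the conjugation map $(\gamma,s) \mapsto \gamma\inv s\gamma$ show that $\tilde f$ is a continuous section of the appropriate pullback bundle, which together with density of such $f$'s in each fibre and the uniform bound $\|\delta_\gamma\| \leq 1$ suffices to conclude continuity of the action.
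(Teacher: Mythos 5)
Your algebraic verifications (well-definedness, multiplicativity via the change of variables \eqref{eq:2b}, adjoint preservation via $\omega(s)=\Delta^u(s)\inv$, the cocycle identity, and the extension to the fibre $C^*$-algebras) are all fine and correspond to the part of the argument the paper treats as routine. The gap is in the continuity step, which is the entire substance of the paper's proof. You assert that continuity of $\omega$, of $\alpha$, and of conjugation makes $\tilde f(\gamma)(s)=\omega(\gamma)\inv\alpha_\gamma(f(\gamma\inv s\gamma))$ ``a continuous section of the appropriate pullback bundle.'' What those hypotheses actually give is continuity of the $\mcal{A}$-valued function $(\gamma,s)\mapsto\tilde f(\gamma)(s)$ together with control of supports. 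That is not the same as continuity of $\gamma\mapsto\tilde f(\gamma)$ into the usc-bundle $\mcal{E}$ of the $C_0(G\unit)$-algebra $A\rtimes S$: the fibre norms there are the universal $C^*$-norms of the crossed products $A(v)\rtimes S_v$, and the topology on $\mcal{E}$ is generated by the sections coming from $A\rtimes S$, not by pointwise data in $\mcal{A}$. Passing from the first kind of continuity to the second is precisely what needs proof, and your sketch restates the conclusion rather than supplying an argument.

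The paper bridges this gap with a two-case subnet analysis. If the ranges $v_n=r(\gamma_n)$ are eventually constant, one argues inside a single fibre: the pointwise estimates give convergence $\delta_{\gamma_n}(f(u_n))\to\delta_{\gamma_0}(f(u_0))$ in the inductive limit topology of $C_c(S_{v_0},A(v_0))$, which dominates the $C^*$-norm. If the $v_n$ are distinct, one assembles the values $\delta_{\gamma_n}(f(u_n))$ into a single function $F\in\Gamma_c(S|_K,p^*\mcal{A})$ over the closed set $K=\{v_n\}_{n\ge 0}$ and invokes Lemma \ref{prop:5}, i.e.\ the isomorphism $A(K)\rtimes S|_K\cong A\rtimes S(K)$, to recognize $F$ as a continuous $\mcal{E}$-section over $K$, whence $F(v_n)\to F(v_0)$ in $\mcal{E}$. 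Some device of this kind is indispensable --- Lemma \ref{prop:5} is proved in the paper essentially for this purpose --- and your proposal contains nothing playing its role. Your final approximation step (choosing $f$ within $\varepsilon/2$ of a section through $a_0$ and using the uniform bound on $\|\delta_\gamma\|$) matches the paper's use of \cite[Proposition C.20]{tfb2} and is sound, but only once the continuity of $\gamma\mapsto\delta_\gamma(f(s(\gamma)))$ has actually been established.
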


\begin{proof}
It is easy enough to show that $\delta_\gamma:A(s(\gamma))\rtimes
S_{s(\gamma)}\rightarrow A(r(\gamma))\rtimes S_{r(\gamma)}$ is a well
defined isomorphism and that $\delta$ respects the groupoid operations
on $G$.  The difficult part is proving that the action is continuous.
Suppose $\mcal{E}$ is the usc-bundle associated to the
$C_0(G\unit)$-algebra $A\rtimes S$.  Given $\gamma_n\rightarrow
\gamma_0$ in $G$ and $a_n\rightarrow a$ in $\mcal{E}$ such that
$s(\gamma_n)=p(a_n)=u_n$ for all $n\geq 0$ we must show that
$\delta_{\gamma_n}(a_n)\rightarrow \delta_{\gamma_0}(a_0)$.  Fix
$\epsilon > 0$ and let $v_n = r(\gamma_n)$ for all $n\geq 0$.  First,
choose $b\in A\rtimes S$ such that $b(u_0) = a_0$.  Next, using the
fact that $\Gamma_c(S,p^*\mcal{A})$ is dense in $A\rtimes S$, we can
choose $f\in \Gamma_c(S,p^*\mcal{A})$ such that $\|f(u)-b(u)\| <
\epsilon/2$ for all $u\in G\unit$. Recall that $f(u)$, the image of
$f$ in $A(u)\rtimes S_u$, is exactly the restriction of $f$ to $S_u$.
We now make the following

\begin{claim}
If $f\in\Gamma_c(S,p^*\mcal{A})$ and $\gamma_n\rightarrow \gamma_0$ as
above then $\delta_{\gamma_n}(f(u_n))\rightarrow
\delta_{\gamma_0}(f(u_0))$.
\end{claim}

\begin{proof}[Proof of Claim]
First, suppose $v_n = v_0$ infinitely often.  Then we can pass to a
subsequence, relabel, and assume $v_n = v_0$ for all $n \geq 0$.  Now
suppose we can pass to another subsequence such that for each $n > 0$
there exists $s_n$  with 
\begin{equation}
\label{eq:3}
\|\delta_{\gamma_n}(f(u_n))(s_n) - \delta_{\gamma_0}(f(u_0))(s_n)\|
\geq \epsilon > 0.
\end{equation}
If this is to hold we must either have $\gamma_n\inv s_n\gamma_n \in
\supp f$ infinitely often or $\gamma_0\inv s_n \gamma_0\in \supp f$
infinitely often.  In either case we may pass to a subsequence,
multiply by the appropriate groupoid elements, and find $s_0$ such
that $s_n\rightarrow s_0$.  However, we then have
$f(\gamma_n\inv s_n\gamma_n) \rightarrow f(\gamma_0\inv s_0\gamma_0)$
and $f(\gamma_0\inv s_n \gamma_0) \rightarrow f(\gamma_0\inv s_0
\gamma_0)$.  Since both $\omega$ and $\alpha$ are continuous, it
follows that $\delta_{\gamma_n}(f(u_n))(s_n)$ and
$\delta_{\gamma_0}(f(u_0))(s_n)$ both converge to
$\delta_{\gamma_0}(f(u_0))(s_0)$ and this contradicts \eqref{eq:3}.
It follows quickly that $\delta_{\gamma_n}(f(u_n))\rightarrow
\delta_{\gamma_0}(f(u_0))$ with respect to the inductive limit
topology and thus in $A(v_0)\rtimes S_{v_0}\subset\mcal{E}$.  

Next, suppose that we may remove an initial segment and assume that
$v_n \ne v_0$ for all $n> 0$.  We may also pass to a subsequence,
relabel, and assume that $v_n \ne v_m$ for all $n\ne m$.  Let $K =
\{v_n\}_{n=0}^\infty$.  Then $C = S|_K =
p\inv(K)$ is closed in $S$ and we can define
$\iota$ on $C$ by $\iota(s) = n$ if and only if $p(s) = v_n$.  Some
simple computations then show that the function 
$F(s) = \delta_{\gamma_{\iota(s)}}(f(\iota(s)))(s)$ is continuous and
compactly supported on $C$.   Thus $F\in\Gamma_c(C,p^*\mcal{A})\subset
A(K)\rtimes S|_K$.  It follows from 
Lemma \ref{prop:5} that $A(K)\rtimes S|_K$ is isomorphic
to the restriction $A\rtimes S(K)$.  In particular, we may view $F$ as
a continuous section of $\mcal{E}$ on $K$, where we recall that
$F(v_n)$ denotes the restriction of $F$ to $S_{v_n}$.  Since $F$ is
continuous, we must have $F(v_n)\rightarrow F(v_0)$. However, we
clearly constructed $F$ so that $F(v_n) = \delta_{\gamma_n}(f(u_n))$
for all $n\geq 0$ and this proves our claim. 
\end{proof}

Thus $\delta_{\gamma_n}(f(u_n)) \rightarrow
\delta_{\gamma_0}(f(u_0))$.  Since both $a_n\rightarrow a_0$ and
$b(u_n)\rightarrow a_0$ it follows that $\|a_n-b(u_n)\|\rightarrow 0$
so that eventually 
\[
\|\delta_{\gamma_n}(f(u_n))-\delta_{\gamma_n}(a_n)\| \leq \|f(u_n) -
b(u_n)\| + \|b(u_n)-a_n\| < \epsilon. 
\]
Since $\|\delta_{\gamma_0}(f(u_0))-\delta_{\gamma_0}(a_0)\| =
\|f(u_0)-b(u_0)\| < \epsilon$ by construction, it now follows from
\cite[Proposition C.20]{tfb2} that $\delta_{\gamma_n}(a_n)\rightarrow
\delta_{\gamma_0}(a_0)$ and we are done. 
\end{proof}

The following corollary will eventually
form our foundation for the equivalence classes determined by $\Phi$. 

\begin{corr}
\label{cor:1}
Suppose $(A,G,\alpha)$ is a groupoid dynamical system and that the
stabilizer subgroupoid has a Haar system.  Then the
action $\delta$ induces an action of $G$ on $(A\rtimes S)\sidehat$
given by $\delta\cdot R  = R\circ \delta_\gamma\inv$.  Furthermore, if
$R=\pi\rtimes U$ then $\gamma\cdot R = \rho\rtimes V$ where 
\begin{equation}
\label{eq:4}
\rho(a) = \pi(\alpha_\gamma\inv(a)), \quad\text{and}\quad V_s =
U_{\gamma\inv s \gamma}.
\end{equation}
\end{corr}

\begin{proof}
The fact that the action exists follows immediately from Proposition
\ref{prop:3}.  Calculating that $\rho$ and $V$ are given 
as above is accomplished by composing the canonical injections of
$A(r(\gamma))$ and $S_{r(\gamma)}$ into $M(A(r(\gamma))\rtimes
S_{r(\gamma)})$  with $\gamma\cdot R$.  
\end{proof}

\begin{remark}
We have omitted many of the calculations in these proofs for brevity.
However, enterprising readers wishing to verify the above computations should
make note of the fact that if $\Delta^u$ is the modular function for
$S_u$ then 
\begin{equation}
\label{eq:2}
\Delta^{s(\gamma)}(s) = \Delta^{r(\gamma)}(\gamma s \gamma\inv)
\quad\text{for $\gamma\in G$.}
\end{equation}
\end{remark}

\subsection{Equivalent Representations}

The primary obstacle in working with induced representations is that
they are not very concrete.  The purpose of this section is to
describe a selection of concrete representations which are equivalent
to $\Ind_S^G R$ for a given $R$.  This material is at least inspired
by \cite{ctgIII}, when it doesn't copy it directly.  We begin by
citing the following

\begin{lemma}[{\cite[Lemma 3.2]{ccrgca}}]
\label{lem:32}
Let $G$ be a locally compact Hausdorff groupoid. 
Suppose $u\in G\unit$, that $A$ is a subgroup of $S_u$, and
that $\beta$ is a Haar measure on $A$.  Then the following hold. 
\begin{enumerate}
\item The formula
\[
Q(f)([\gamma]) = \int_A f(\gamma s)d\beta(s)
\]
defines a surjection from $C_c(G)$ onto $C_c(G_u/A)$.  
\item There is a non-negative continuous function $b$ on
  $G_u$ such that for any compact set $K\subset G_u$ the support of
  $b$ and $KA$ have compact intersection and for all $\gamma\in G_u$
\begin{equation}
\label{eq:127}
\int_A b(\gamma s)d\beta(s) = 1.
\end{equation}
\end{enumerate}
\end{lemma}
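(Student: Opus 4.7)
The plan is to prove (2) first and then derive the surjectivity in (1) from the cut-off function it provides; the remaining content of (1) is essentially formal.

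For the formal content of (1): well-definedness of $Q(f)$ on $G_u/A$ is immediate from left-invariance of $\beta$, continuity of $Q(f)$ follows from uniform continuity of $f$ on its compact support together with dominated convergence, and $\supp Q(f) \subset \pi(\supp f)$, where $\pi\colon G_u \to G_u/A$ denotes the quotient map. Thus $Q(f)\in C_c(G_u/A)$.

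The heart of the argument is (2). Since $A$ is a closed subgroup of $S_u$ acting freely on $G_u$ by right multiplication, a direct verification using continuity of groupoid multiplication shows this action is proper; hence $G_u/A$ is second countable, locally compact, Hausdorff, and $\pi$ is a continuous open surjection. I choose a locally finite open cover $\{W_n\}$ of $G_u/A$ by relatively compact sets, so that $\{\pi\inv(W_n)\}$ is a locally finite cover of $G_u$. For each $n$, I lift $\overline{W_n}$ to a compact $K_n\subset G_u$ with $\pi(K_n) = \overline{W_n}$ and pick $\chi_n \in C_c(G_u)$ non-negative, strictly positive on $K_n$, and supported in $\pi\inv(W_n)$. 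Set $c = \sum_n \chi_n$; since $\{\supp\chi_n\}$ is locally finite in $G_u$, this defines a continuous function. Its key properties are: for any compact $K\subset G_u$, $\pi(K)$ meets only finitely many $W_n$, so $KA = \pi\inv(\pi(K))$ meets only finitely many $\supp\chi_n$, whence $\supp c \cap KA$ is compact; and every $A$-orbit in $G_u$ meets some $K_n$, giving $\int_A c(\gamma s)\,d\beta(s) > 0$ for all $\gamma$. The map $\gamma\mapsto \int_A c(\gamma s)\,d\beta(s)$ is then $A$-invariant (by left-invariance of $\beta$) and continuous (the compact-intersection property localizes the integration to a compact set), so setting $b(\gamma) = c(\gamma)/\int_A c(\gamma s)\,d\beta(s)$ yields the desired cut-off function: $b$ is continuous and non-negative, $\supp b \subset \supp c$ inherits the compact-intersection property, and the denominator cancels to give $\int_A b(\gamma s)\,d\beta(s) = 1$.

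Surjectivity in (1) then falls out: given $g\in C_c(G_u/A)$, define $f(\gamma) = b(\gamma)\,g(\pi(\gamma))$; then $\supp f$ is compact by (2) applied to any compact $K\subset G_u$ with $\pi(K) = \supp g$ (such a $K$ exists since $\pi$ is a continuous open surjection), and a routine calculation gives $Q(f) = g$. The main obstacle is (2) --- specifically, constructing $c$ so that $\supp c \cap KA$ is compact for every compact $K\subset G_u$. This rests on pulling back a locally finite cover from the quotient in a way that controls supports in $G_u$, and ultimately on the properness of the $A$-action to convert compactness in $G_u/A$ into the required compactness modulo $A$ in $G_u$.
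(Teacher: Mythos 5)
The paper does not actually prove this lemma: it is quoted verbatim from \cite[Lemma 3.2]{ccrgca}, and the only hint of an argument given here is the remark immediately following the statement, namely that $b$ is the normalization of a function $b'$ supplied by \cite[Lemma 1]{dixmiercstar} ($b'$ non-negative, continuous, with compact intersection of $\supp b'$ and $KA$ for every compact $K$, and not identically zero on any orbit). Your argument reconstructs exactly that route: your $c$ plays the role of $b'$, built by the usual Bruhat cut-off device (a locally finite, relatively compact cover of the proper quotient $G_u/A$, pulled back and summed), your final step is the normalization, and your derivation of surjectivity in (1) from (2) via $f=b\cdot(g\circ\pi)$ is the standard one. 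So this is essentially the same approach as the cited sources, with the construction filled in. One detail as written does not cohere: you take $K_n$ compact with $\pi(K_n)=\overline{W_n}$ and then ask for $\chi_n$ strictly positive on $K_n$ but supported in $\pi^{-1}(W_n)$; since $\overline{W_n}$ need not be contained in $W_n$, no such $\chi_n$ need exist. The standard fix is to shrink the cover first: choose open $V_n$ with $\overline{V_n}\subset W_n$ still covering, and lift $\overline{V_n}$ to a compact $K_n\subset\pi^{-1}(W_n)$ by intersecting any compact lift with $\pi^{-1}(\overline{V_n})$; everything else you say then goes through. Two smaller points worth making explicit: $A$ must be taken closed in $S_u$ (as it is in the application, where $A=S_u$) for properness of the action and for $KA=\pi^{-1}(\pi(K))$ to be closed; and since the lemma asserts surjectivity from $C_c(G)$ rather than $C_c(G_u)$, your $f$, defined only on the closed set $G_u$, should be extended to all of $G$ by Tietze, which is harmless.
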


The function $b$ in Lemma \ref{lem:32} is the normalization of a
function $b'$ which satisfies all of the conditions of (b) except for
\eqref{eq:127}.  This function is guaranteed to exist by \cite[Lemma
1]{dixmiercstar}.  Furthermore, \cite{dixmiercstar} also proves that
$b'$ is positive, continuous, and $b'$ is not zero on any entire
equivalence class.  We now define 
\begin{equation}
\label{eq:3b}
\rho(\gamma) = \int_A b'(\gamma s) \Delta(s)\inv d\beta(s)
\end{equation}
for $\gamma\in G_u$ where $\Delta$ is the modular function for $A$.  
Notice that $\rho(\gamma) > 0$ for all $\gamma$ because the modular function is
strictly greater than zero and $b'$ is positive and not zero on any
entire equivalence class.  An important property of $\rho$ is that 
for $\gamma\in G_u$ and $s \in A$
\begin{align}
\label{eq:4b}
\rho(\gamma s) &= \int_A b'(\gamma st)\Delta(t)\inv d\beta(t) =
\int_A b'(\gamma t)\Delta(s)\Delta(t)\inv d\beta(t) = \Delta(s)\rho(\gamma).
\end{align}
We can now cite the following

\begin{lemma}[{\cite[Lemma 3.3]{ccrgca}}]
\label{lem:4}
There is a Radon measure $\sigma$ on $G_u/A$ such that 
\begin{equation}
\label{eq:129}
\int_G f(\gamma)\rho(\gamma) d\lambda_u(\gamma) = \int_{G_u/A}\int_A f(\gamma
s)d\beta(s)d\sigma([\gamma])
\end{equation}
for all $f\in C_c(G_u)$.  
\end{lemma}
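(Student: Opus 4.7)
The plan is to produce a positive linear functional on $C_c(G_u/A)$ and appeal to the Riesz representation theorem. The natural candidate is the functional $I$ defined by
\[
I(F) = \int_{G_u} g(\gamma)\rho(\gamma)\,d\lambda_u(\gamma),
\]
where $g\in C_c(G_u)$ is any lift of $F$ in the sense that $Q(g)=F$; such a $g$ exists by Lemma \ref{lem:32}(a). If $I$ is well defined, positive, and continuous in the inductive limit topology, the Riesz representation theorem delivers a Radon measure $\sigma$ on $G_u/A$ with $I(F)=\int_{G_u/A} F\,d\sigma$, and applying this identity to $F=Q(f)$ yields exactly \eqref{eq:129}.

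The main obstacle, and essentially the entire content of the lemma, is the well-definedness of $I$: I must show that $Q(g)=0$ implies $\int_{G_u} g(\gamma)\rho(\gamma)\,d\lambda_u(\gamma)=0$. The idea is to insert the partition-of-unity identity \eqref{eq:127} and apply Fubini. Concretely, write
\[
\int_{G_u} g(\gamma)\rho(\gamma)\,d\lambda_u(\gamma)
= \int_{G_u}\!\!\int_A g(\gamma)\rho(\gamma)b(\gamma s)\,d\beta(s)\,d\lambda_u(\gamma),
\]
where the support condition on $b$ from Lemma \ref{lem:32}(b) and compactness of $\supp g$ justify the interchange. After swapping the order of integration and performing a right translation $\gamma\mapsto \gamma s^{-1}$ in the inner integral, one uses the right quasi-invariance of $\lambda_u$ under elements of $S_u$ (with Radon--Nikodym factor $\Delta(s)$, the modular function for $A\subset S_u$) together with the cocycle identity \eqref{eq:4b}, namely $\rho(\gamma s)=\Delta(s)\rho(\gamma)$, to cancel all factors of $\Delta$. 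The result is an expression of the form
\[
\int_A \int_{G_u} g(\gamma s)\rho(\gamma)b(\gamma)\,d\lambda_u(\gamma)\,d\beta(s)
= \int_{G_u} Q(g)(\gamma)\,\rho(\gamma)b(\gamma)\,d\lambda_u(\gamma),
\]
which depends only on $Q(g)$; in particular it vanishes whenever $Q(g)=0$. This computation is precisely analogous to the standard one for groups found in the cited work of Dixmier, the only new ingredient being that the modular function now lives on the isotropy group $S_u$ rather than on an ambient group.

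Positivity of $I$ is then immediate: given a nonnegative $F\in C_c(G_u/A)$, the product $g(\gamma):=b(\gamma)F([\gamma])$ lies in $C_c(G_u)$ (again by the support condition on $b$), is nonnegative, and satisfies $Q(g)=F$ by \eqref{eq:127}; hence $I(F)=\int_{G_u} g\rho\,d\lambda_u\geq 0$ since $\rho>0$. Continuity with respect to the inductive limit topology follows by choosing the lift uniformly via $b$ on a fixed compact set, and Riesz then furnishes $\sigma$. Unwinding the definitions with $F=Q(f)$ recovers \eqref{eq:129}, completing the proof.
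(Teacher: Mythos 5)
The paper offers no proof of this lemma; it is quoted from \cite[Lemma 3.3]{ccrgca}, whose argument is the classical Weil--Mackey--Dixmier construction of a quasi-invariant measure from a $\rho$-function. Your proposal reproduces exactly that construction --- lift through $Q$, verify that the lifted functional is well defined by integrating against $b$ and applying Fubini, then invoke the Riesz representation theorem --- so the architecture is the standard and correct one, and the final displayed identity in your well-definedness computation is right.

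One assertion along the way is wrong as stated. You appeal to ``right quasi-invariance of $\lambda_u$ under elements of $S_u$ with Radon--Nikodym factor $\Delta(s)$.'' In fact $\lambda_u=(\lambda^{u})^{-1}$ is genuinely right-invariant under $S_u$: left-invariance of the Haar system gives $\int_{G_u} h(\gamma s)\,d\lambda_u(\gamma)=\int_{G_u}h(\gamma)\,d\lambda_u(\gamma)$ for $s\in S_u$, with no Jacobian at all. The factor of $\Delta$ that cancels against the cocycle identity $\rho(\gamma s)=\Delta(s)\rho(\gamma)$ of \eqref{eq:4b} comes instead from reversing the inner integral over $A$, via the inversion formula $\int_A h(s^{-1})\Delta(s)^{-1}\,d\beta(s)=\int_A h(s)\,d\beta(s)$ for the left Haar measure $\beta$. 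If you literally insert a Jacobian $\Delta(s)$ for the substitution $\gamma\mapsto\gamma s^{-1}$, the modular factors do not cancel and the functional does not visibly factor through $Q$; with the invariance stated correctly the computation closes as
\[
\int_{G_u} g(\gamma)\rho(\gamma)\,d\lambda_u(\gamma)
=\int_{G_u} b(\gamma)\rho(\gamma)\,Q(g)([\gamma])\,d\lambda_u(\gamma),
\]
which indeed vanishes when $Q(g)=0$. The remainder --- positivity via the lift $g=b\cdot(F\circ q)$, where $q:G_u\to G_u/A$ is the quotient map, and the appeal to Riesz (for which positivity alone already suffices, so the inductive-limit continuity is not actually needed) --- is fine.
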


\begin{remark} 
It is not particularly difficult to show that $\sigma$ has full support
on $G_u/A$. 
\end{remark}

Suppose $(A,G,\alpha)$ is a groupoid dynamical system with stabilizer
subgroupoid $S$.  For all $u\in S\unit$ let
$\beta^u$ be a Haar measure on $S_u$.  Using Lemma \ref{lem:4}, for each
$u\in G\unit$ there exists a Radon measure $\sigma^u$ with full support on
$G_u/S_u$ and an associated continuous strictly positive function
$\rho^u$ on $G_u$ such that 
\[
\int_G f(\gamma)\rho^u(\gamma)d\lambda_u(\gamma) = \int_{G_u/S_u}\int_S f(\gamma s)
d\beta^u(s) d\sigma^u([\gamma]).
\]
For the rest of this section whenever we have $(A,G,\alpha)$ and $S$
as above we will let $\sigma = \{\sigma^u\}$ and $\rho = \{\rho^u\}$ be defined in this way.  Next, we construct a
Hilbert space which we will use for one of our equivalent representations. 

\begin{lemma}
\label{lem:2}
Fix $u\in G\unit$ and
suppose $R=\pi\rtimes U$ is a covariant representation of $A(u)\rtimes
S_u$ on a separable Hilbert space $\mcal{H}$.  Let $\mcal{V}_u$ be
the set of Borel functions $\phi:G_u\rightarrow\mcal{H}$ such that
$\phi(\gamma s) = U_s^* \phi(\gamma)$ for all $\gamma\in G_u$ and $s\in
S_u$. Define 
\[
\mcal{L}^2_U(G_u,\mcal{H},\sigma^u) = \left\{ \phi\in \mcal{V}_u :\int_{G_u/S_u}
\|\phi(\gamma)\|^2 d\sigma^u([\gamma]) < \infty\right\}
\]
and let $L^2_U(G_u,\mcal{H},\sigma^u)$ be the quotient of
$\mcal{L}^2_U(G_u,\mcal{H},\sigma^u)$ where we identify functions
which agree almost everywhere.  Then 
$L^2_U(G_u,\mcal{H},\sigma^u)$ is a Hilbert space with the inner
product
\[
(\phi,\psi) := \int_{G_u/S_u}
(\phi(\gamma),\psi(\gamma))d\sigma^u([\gamma]).
\]
\end{lemma}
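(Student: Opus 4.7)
The plan is to mimic the standard construction of $L^2$-spaces of square-integrable sections, with the twist that covariance under $U$ guarantees the relevant quantities descend to the quotient $G_u/S_u$. I would break the argument into four steps: (1) check that the integrands defining the norm and the inner product are well defined on $G_u/S_u$; (2) verify that $\mcal{L}_U^2(G_u,\mcal{H},\sigma^u)$ is a complex vector space; (3) verify that the formula defines an inner product after passing to the quotient by null functions; (4) prove completeness.

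For step (1), since each $U_s$ is unitary, the covariance relation $\phi(\gamma s)=U_s^*\phi(\gamma)$ gives $\|\phi(\gamma s)\|^2=\|\phi(\gamma)\|^2$ and $(\phi(\gamma s),\psi(\gamma s))=(\phi(\gamma),\psi(\gamma))$, so both functions on $G_u$ are constant on $S_u$-orbits and therefore factor through the quotient map $G_u\to G_u/S_u$. Borel measurability on $G_u$ descends to Borel measurability on $G_u/S_u$ because a function on the quotient is Borel exactly when its pullback is. For (2), closure under scalar multiplication is immediate, and closure under addition follows from $\|\phi+\psi\|\leq\|\phi\|+\|\psi\|$ pointwise, followed by Minkowski on the ordinary Lebesgue space $L^2(G_u/S_u,\sigma^u)$. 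The covariance condition is obviously preserved by linear combinations. Step (3) is routine: sesquilinearity is inherited from the inner product on $\mcal{H}$, Cauchy--Schwarz pointwise combined with Cauchy--Schwarz on $L^2(G_u/S_u,\sigma^u)$ shows the integral converges absolutely, positivity is clear from the formula, and quotienting by the subspace of functions vanishing $\sigma^u$-almost everywhere (in the sense that the image of their support in $G_u/S_u$ is null) makes the form positive definite.

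The main obstacle is step (4), completeness. I would follow the classical scheme. Given a Cauchy sequence $\{\phi_n\}$ in $L_U^2(G_u,\mcal{H},\sigma^u)$, pass to a subsequence (which I will again call $\phi_n$) with $\|\phi_{n+1}-\phi_n\|<2^{-n}$. The scalar function $g_N(\gamma)=\sum_{n\leq N}\|\phi_{n+1}(\gamma)-\phi_n(\gamma)\|$ is Borel, covariance-invariant in its definition, and satisfies $\|g_N\|_{L^2(G_u/S_u,\sigma^u)}\leq 1$ by Minkowski. Monotone convergence shows $g(\gamma)=\lim g_N(\gamma)$ is finite for $\sigma^u$-almost every class $[\gamma]$, and the set on which $g<\infty$ is $S_u$-invariant thanks to covariance. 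On this set the series $\phi_1(\gamma)+\sum_n(\phi_{n+1}(\gamma)-\phi_n(\gamma))$ converges absolutely in $\mcal{H}$ to some $\phi(\gamma)$, which I extend by $0$ off the convergence set; the extension is Borel, and $S_u$-invariance of the convergence set together with pointwise continuity of $U_s^*$ shows $\phi\in\mcal{V}_u$.

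Finally, Fatou's lemma applied to $\|\phi-\phi_n\|^2$ on $G_u/S_u$ yields $\phi\in\mcal{L}_U^2(G_u,\mcal{H},\sigma^u)$ and $\phi_n\to\phi$ in norm, so the original Cauchy sequence converges to $\phi$ in $L_U^2(G_u,\mcal{H},\sigma^u)$. The only subtlety worth flagging is the interaction between ``$\sigma^u$-almost everywhere'' on $G_u/S_u$ and ``almost everywhere'' on $G_u$; covariance guarantees that all relevant null sets are saturated under the $S_u$-action, so the two notions coincide and the standard arguments go through without modification.
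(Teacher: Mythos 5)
Your proposal is correct and follows essentially the same route as the paper: the paper dismisses the vector-space and inner-product verifications as straightforward and devotes its proof entirely to completeness, using exactly your Riesz--Fischer scheme (rapidly Cauchy subsequence, Minkowski bound on the telescoping sums, monotone convergence to get a saturated null set, pointwise limit extended by zero, covariance via saturation, and Fatou to conclude). Your explicit attention to the saturation of null sets under the $S_u$-action matches the paper's lifting of the null set $N$ to $NS_u$.
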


\begin{proof}
Much of this lemma is straightforward and we will limit ourselves to
proving that $L^2_U(G_u,\mcal{H},\sigma^u)$ is complete.  
Suppose $\phi_n$ is a Cauchy sequence.  We can pass to a
subsequence, relabel and assume that $\|\phi_{n+1} - \phi_n\| < \frac{1}{2^n}$
for all $n$.  We define the
following extended real valued functions on $G_u$ by 
\[
z_n(\gamma) = \sum_{i=1}^n \|\phi_{i+1}(\gamma)-\phi_i(\gamma)\|,\quad\text{and}\quad
z(\gamma) = \sum_{i=1}^\infty \|\phi_{i+1}(\gamma)-\phi_i(\gamma)\|.
\]
Of course, $z_n$ is constant on $S_u$ orbits and factors to a Borel
map on $G_u/S_u$.  Using the triangle inequality in
$L^2(G_u/S_u,\sigma^u)$ we find 
\[
\|z_n\| \leq \sum_{i=1}^n \left(\int_{G_u/S_u}
\|\phi_{i+1}(\gamma)-\phi_i(\gamma)\|^2d\sigma^u([\gamma])\right)^{1/2} =
\sum_{i=1}^n \|\phi_{i+1}-\phi_i\| \leq 1
\]
Since $\|z_n\|^2 = \int_{G_u/S_u} z_n(\gamma)^2d\sigma^u([\gamma])$ it follows from
the Monotone Convergence Theorem that 
$\|z\|^2 = \int_{G_u/S_u} z(\gamma)^2d\sigma^u([\gamma]) \leq 1$.
Hence, there is a $\sigma^u$-null set $N$ such that $[\gamma]\not\in
N$ implies $z(\gamma)< \infty$.  In particular, we can lift $N$ to
$G_u$ and get a $\lambda_u$-null set $NS_u$ such that $\gamma\not\in
NS_u$ implies 
\begin{equation}
\label{eq:31}
\sum_{i=1}^\infty \phi_{i+1}(\gamma)-\phi_i(\gamma)
\end{equation}
is absolutely convergent.  Hence \eqref{eq:31} converges to some
$\phi'(\gamma)\in \mcal{H}$ for all $\gamma\not\in NS_u$.  Furthermore 
\[
\phi'(\gamma) = \lim_{n\rightarrow \infty} \sum_{i=1}^n
\phi_{i+1}(\gamma) - \phi_i(\gamma) = \lim_{n\rightarrow\infty}
\phi_{n+1}(\gamma) - \phi_1(\gamma)
\]
Thus $\phi(\gamma) := \phi'(\gamma)+\phi_1(\gamma)$ satisfies
\begin{equation}
\label{eq:143}
\phi(\gamma) = \lim_{n\rightarrow\infty} \phi_n(\gamma)
\end{equation}
for all $\gamma\not\in NS_u$.  Therefore $\phi_n\rightarrow \phi$ almost
everywhere and $\phi$ is a Borel function.  Now let $\phi$ be zero off $NS_u$.
Then, using \eqref{eq:143} and the fact that $NS_u$ is saturated, we
find that $\phi(\gamma s) = U_s^* \phi(\gamma)$
for all $\gamma\in G_u$ and $s\in S_u$.  Next, given $\epsilon > 0$
there exists $M$ such that $\|\phi_n-\phi_m\| < \epsilon$ for all
$n,m\geq M$.  If $\gamma \not\in NS_u$ then 
$\|\phi(\gamma)-\phi_i(\gamma)\| = \lim_{n\rightarrow
  \infty}\|\phi_n(\gamma)-\phi_i(\gamma)\|$.
Thus, if $k\geq M$, Fatou's Lemma implies that 
$\|\phi-\phi_k\|^2 \leq \liminf_{n\rightarrow\infty}
\|\phi_n-\phi_k\|^2 \leq \epsilon^2.$
Furthermore we have 
\begin{align*}
\|\phi(\gamma)\|^2 &\leq
(\|\phi(\gamma)-\phi_k(\gamma)\|+\|\phi_k(\gamma)\|)^2 \\
&\leq 3\|\phi(\gamma)-\phi_k(\gamma)\|^2 + 3\|\phi_k(\gamma)\|^2
\end{align*}
so that 
\[
\int_{G_u/S_u} \|\phi(\gamma)\|^2 d\sigma^u([\gamma]) \leq 3\|\phi-\phi_k\|^2 +
3\|\phi_k\|^2 < \infty.
\]
Thus $\phi\in\mcal{L}_U^2(G_u,\mcal{H},\sigma^u)$,
$\phi_n\rightarrow \phi$ in $L_U^2(G_u,\mcal{H},\sigma^u)$, and, to
quote the inspiration for this argument \cite[Page 290]{tfb2}, ``this completes the proof of completeness.''
\end{proof}

Using this Hilbert space we have the following 
\begin{prop}
\label{prop:4}
Suppose $(A,G,\alpha)$ is a groupoid dynamical system and that the
stabilizer subgroupoid $S$ has a Haar system.  Fix
$u\in G\unit$ and let $R=\pi\rtimes U$ be a covariant representation of
$A(u)\rtimes S_u$ acting on the separable Hilbert space $\mcal{H}$.
Then $\Ind_{S_u}^G R$ is equivalent to the representation $T^R$ on
$L_U^2(G_u,\mcal{H},\sigma^u)$ defined for $f\in
\Gamma_c(G,r^*\mcal{A})$ and $\phi\in
\mcal{L}^2_U(G_u,\mcal{H},\sigma^u)$ by 
\begin{equation}
\label{eq:7}
T^R(f)\phi(\gamma) = \int_G \pi(\alpha_\gamma\inv(f(\gamma \eta\inv)))
\phi(\eta) \rho^u(\eta)\poshalf \rho^u(\gamma)\neghalf d\lambda_u(\eta).
\end{equation}
\end{prop}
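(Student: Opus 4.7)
The plan is to realize the induced representation $\Ind_{S_u}^G R$ concretely on $L_U^2(G_u,\mcal{H},\sigma^u)$ by building an explicit unitary $W$ from $\mcal{Z}_{S_u}^G \otimes_{A(u)\rtimes S_u}\mcal{H}$ onto $L_U^2(G_u,\mcal{H},\sigma^u)$ and then verifying that it intertwines $\Ind_{S_u}^G R$ with $T^R$. Since $\mcal{Z}_{S_u}^G$ is a completion of $C_c(G_u,A(u))$, I would first define $W$ on elementary tensors $f\otimes h$, $f\in C_c(G_u,A(u))$, $h\in\mcal{H}$, by a formula of the form
\[
W(f\otimes h)(\gamma) = \rho^u(\gamma)\neghalf \int_{S_u} \Delta^u(s)\poshalf U_s\pi(\alpha_\gamma\inv(f(\gamma s\inv)))h\, d\beta^u(s),
\]
where the $\rho^u$ and $\Delta^u$ factors are put in precisely so that $W(f\otimes h)$ lies in $\mcal{V}_u$. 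Verifying the covariance $W(f\otimes h)(\gamma t) = U_t^*W(f\otimes h)(\gamma)$ for $t\in S_u$ is a change of variables in the $S_u$-integral combined with \eqref{eq:4b} and the covariance relation $U_t\pi(a)U_t^* = \pi(\alpha_t(a))$.

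The central step is the isometry
\[
(W(f_1\otimes h_1), W(f_2\otimes h_2))_{L_U^2} = (h_1,R(\langle f_1,f_2\rangle_{A(u)\rtimes S_u})h_2)_{\mcal{H}}.
\]
I would expand the left hand side using the definition of $W$ and the inner product on $L_U^2$, interchange the $S_u$- and $G_u/S_u$-integrals via Fubini, and then invoke Lemma \ref{lem:4} to combine the outer $\int_{G_u/S_u}\rho^u(\gamma)(\cdots)d\sigma^u([\gamma])$ with the free factor of $\rho^u$ produced by the definition of $W$ into an integral $\int_{G_u}(\cdots)d\lambda_u(\gamma)$. A further change of variables on $S_u$ and the covariance of $(\pi,U)$ then collapse everything to the integrated form of $\pi\rtimes U$ applied to $\langle f_1,f_2\rangle$.

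Once isometry is established $W$ extends by continuity to the entire completed tensor product. Density of the range in $L_U^2(G_u,\mcal{H},\sigma^u)$ can be obtained via a partition of unity argument on $G_u/S_u$ combined with the surjectivity part of Lemma \ref{lem:32}(a): any compactly supported continuous $\mcal{H}$-valued function on $G_u$ can be averaged over $S_u$ to yield a function in $\mcal{V}_u$ of the form $W(f\otimes h)$, and such averages are dense. The final task is to check $T^R(f)W = W\Ind_{S_u}^G R(f)$ on elementary tensors. Using the fact that the left action of $A\rtimes G$ on $\mcal{Z}_{S_u}^G$ is given at the $C_c$-level by a convolution of the form $(f\cdot g)(\gamma) = \int_G\alpha_\gamma\inv(f(\gamma\eta\inv))g(\eta)\,d\lambda_u(\eta)$, substituting into $W((f\cdot g)\otimes h)(\gamma)$ and interchanging integrals via Fubini and Lemma \ref{lem:4} reduces directly to \eqref{eq:7} applied to $W(g\otimes h)$.

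The principal obstacle is the isometry calculation. The bookkeeping of the factors $\rho^u$, $\Delta^u$, the left-invariance of $\beta^u$, and the precise conventions under which one forms the module $\mcal{Z}_{S_u}^G$ and the integrated form of $R=\pi\rtimes U$ is delicate; in particular the $\rho^u$-exponents must be arranged so that exactly one factor of $\rho^u$ survives to absorb the change of measure supplied by Lemma \ref{lem:4}. The balanced appearance of $\rho^u(\eta)\poshalf\rho^u(\gamma)\neghalf$ in \eqref{eq:7} is what forces the factor $\rho^u(\gamma)\neghalf$ in the definition of $W$, and checking that all of these factors fit together correctly is where the work really lies.
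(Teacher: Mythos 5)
This is essentially the paper's own argument: the paper defines $V(z\otimes h)(\gamma)=\int_{S}U_s\pi(z(\gamma s))h\,\rho^u(\gamma s)\neghalf\,d\beta^u(s)$ on elementary tensors, checks isometry using Lemma \ref{lem:4}, proves surjectivity, and sets $T^R=V(\Ind_{S_u}^GR)V^*$, so your strategy is the right one. Two details in your sketch need repair, though: the $\alpha_\gamma\inv$ in your formula for $W$ does not typecheck (elements of $C_c(G_u,A(u))$ already take values in $A(u)$, so no twist appears in the intertwiner, only in the formula for $T^R$ itself), and range density cannot be obtained from single elementary tensors as you assert --- the paper instead shows the orthogonal complement of the range vanishes by localizing to compact $K\subset G_u$ and using nondegeneracy of $\pi$ to see that the functions $\gamma\mapsto\pi(z(\gamma))h$ span a dense subspace of $L^2(K,\mcal{H},\lambda_u)$.
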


\begin{proof}
First recall that $\Ind_{S_u}^G R$ acts on the Hilbert space
$\mcal{Z}_{S_u}^G \otimes_{A(u)\rtimes S_u}\mcal{H}$ where
$\mcal{Z}_{S_u}^G$ is the completion of the pre-Hilbert $A(u)\rtimes
S_u$-module $C_c(G_u,A(u))$.  Define $V:C_c(G_u,A(u))\odot \mcal{H}
\rightarrow L^2_U(G_u,\mcal{H},\sigma^u)$ on elementary tensors by 
\begin{equation}
\label{eq:5}
V(z\otimes h)(\gamma) = \int_S U_s \pi(z(\gamma s))h \rho^u(\gamma
s)\neghalf \, d\beta^u(s).
\end{equation}
It is not difficult to prove that $V(z\otimes h)$ is an element of
$\mcal{L}_U^2(G_u,\mcal{H},\sigma^u)$.  Furthermore, simple
computations show that $V$ is isometric and extends to an isometry
from $\mcal{Z}_{S_u}^G \otimes_{A(u)\rtimes S_u} \mcal{H}$ into
$L^2_U(G_u,\mcal{H},\sigma^u)$.  In order to show that $V$ is a
unitary it will suffice to show that given
$\phi\in\mcal{L}_U^2(G_u,\mcal{H},\sigma^u)$ such that $(V(z\otimes
h),\phi) = 0$ for all $z\in C_c(G_u,A(u))$ and $h\in\mcal{H}$ then
$\phi$ is zero $\lambda_u$-almost everywhere.  We have
\begin{align}
\label{eq:158}
0 &= (V(z\otimes h),\phi) = \int_{G_u/S_u} (V(z\otimes
h)(\gamma),\phi(\gamma))d\sigma^u([\gamma]) \\ \nonumber
&= \int_{G_u/S_u}\int_S (U_s \pi(z(\gamma s))h,
\phi(\gamma))\rho^u(\gamma s)\neghalf d\beta^u(s)d\sigma^u([\gamma]) \\ \nonumber
&= \int_{G_u/S_u}\int_S (\pi(z(\gamma s))h, \phi(\gamma
s)) \rho^u(\gamma s)\neghalf d\beta^u(s)d\sigma^u([\gamma]) \\ \nonumber
&= \int_G (((\pi\circ z)\otimes h)(\gamma),\phi(\gamma))
\rho^u(\gamma)\poshalf d\lambda_u(\gamma)
\end{align}
where $(\pi\circ z)\otimes h$ denotes the function $\gamma\mapsto
\pi(z(\gamma))h$.  Now suppose $K\subset G_u$ is compact and let
$\phi|_K$ be the function obtained by letting $\phi$ be zero off
$K$.  If  $g\in C_c(G_u)$ is one on $K$ then by Lemma \ref{lem:32}
\[
F([\gamma]) = \int_S g(\gamma s) \rho^u(\gamma s)\inv d\beta^u(s)
\]
defines an element of $C_c(G_u/S_u)$.  We observe that 
\begin{align*}
\int_G \|\phi|_K(\gamma)\|^2 d\lambda_u(\gamma) &\leq
\int_G g(\gamma)\|\phi(\gamma)\|^2 d\lambda_u(\gamma) \\
&= \int_{G_u/H_u} \|\phi(\gamma)\|^2 \int_{S_u} g(\gamma s)
\rho^u(\gamma s)\inv
d\beta^u(s)d\sigma^{u}([\gamma]) \\
&\leq \|\phi\|^2 \|F\|_\infty.
\end{align*}
Thus $\phi|_K\in L^2(G_u,\mcal{H})$. Next, given $z\in C_c(G_u,A(u))$ such that
$\supp z \subset K$ we conclude from \eqref{eq:158} that 
\begin{equation}
\label{eq:8}
0 = \int_G (((\pi\circ z)\otimes h)(\gamma),\phi(\gamma))
\rho^u(\gamma)\poshalf d\lambda_u(\gamma)
= ((\pi\circ z)\otimes h,\phi(\rho^u)\poshalf)_{L^2(K,\mcal{H},\lambda_u)}.
\end{equation}
Because $\rho^u$ is strictly positive, it follows that  
$\phi|_K$ will be zero $\lambda_u$-almost everywhere if we can
show that elements of the form $(\pi\circ z)\otimes h$ span a dense
set in
$L^2(K,\mcal{H},\lambda_u)$.  However, we can restrict ourselves even
further and work with elementary tensors of the form 
\[
f\otimes (\pi(a)h) =((f\otimes a) \circ \pi)\otimes h
\]
 where $f\in C_c(K)$, $a\in A(u)$, and
$h\in\mcal{H}$.  However, using nondegeneracy, it is fairly clear that
these elements span a dense set in
$L^2(K,\mcal{H},\lambda_u)$.  Thus $\phi|_K$ is zero
$\lambda_u$-almost everywhere.  Since $K$ was arbitrary and $G_u$ is
$\sigma$-compact, the result follows.   
Hence $V$ is a unitary and as such we can define the representation
$T^R := V\Ind_{S_u}^G R V^*$.  The fact that $T^R$ is given by
\eqref{eq:7} is the result of a slightly messy computation. 
\end{proof}

Next, because $G_u$ is second countable, we can find a Borel cross
section $c:G_u/S_u \rightarrow G_u$ and this allows us to define a 
Borel map $\delta : G_u\rightarrow S_u$ such that $\gamma =
c([\gamma])\delta(\gamma)$.  We will need these maps in order
to find a representation equivalent to $T^R$ which acts on
$L^2(G_u/S_u,\mcal{H},\sigma^u)$.  

\begin{prop}
\label{prop:6}
Suppose $(A,G,\alpha)$ is a groupoid dynamical system with stabilizer
subgroupoid $S$. Fix $u\in G\unit$, let $R =
\pi\rtimes U$ be a representation of $A(u)\rtimes S_u$ on the
separable Hilbert space $\mcal{H}$, and let $\delta$ be as above.  Then
$T^R$ and $\Ind_{S_u}^G R$ are equivalent to the
representation $N^R$ on $L^2(G_u/S_u,\mcal{H},\sigma^u)$ given by 
\begin{align}
\label{eq:9}
N^R(f)(\phi)([\gamma]) &= \int_G
U_{\delta(\gamma)}\pi(\alpha_\gamma\inv(f(\eta)))U_{\delta(\eta\inv\gamma)}^*
\phi([\eta\inv\gamma]) \ldots \\ \nonumber
&\hspace{.4in}\ldots \rho^u(\eta\inv\gamma)\poshalf \rho^u(\gamma)\neghalf
d\lambda^{r(\gamma)}(\eta)
\end{align}
\end{prop}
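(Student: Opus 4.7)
The plan is to exhibit an explicit unitary
\[
W\colon L^2_U(G_u,\mcal{H},\sigma^u)\to L^2(G_u/S_u,\mcal{H},\sigma^u)
\]
and then verify \(N^R = W T^R W^*\) by a change of variables that converts the integral over \(G_u\) appearing in \eqref{eq:7} into an integral over \(G^{r(\gamma)}\) as in \eqref{eq:9}. Because both Hilbert spaces use the same measure \(\sigma^u\) on \(G_u/S_u\), the Borel section \(c\) and its cocycle \(\delta\) should supply exactly the data needed to handle the covariance relation \(\phi(\gamma s)=U_s^*\phi(\gamma)\).

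First I would set
\[
W\phi([\gamma]) := U_{\delta(\gamma)}\phi(\gamma), \qquad W^*\psi(\gamma) := U_{\delta(\gamma)}^*\psi([\gamma]).
\]
That \(W\phi\) depends only on the coset \([\gamma]\) follows from the cocycle identity \(\delta(\gamma s) = \delta(\gamma)s\), which is immediate from \(c([\gamma s]) = c([\gamma])\), combined with the covariance of \(\phi\); similarly \(W^*\psi\) satisfies the defining covariance of \(\mcal{L}_U^2\). Since \(\delta\) is Borel, both \(W\) and \(W^*\) preserve measurability, and the pointwise unitarity of \(U\) gives \(\|W\phi([\gamma])\| = \|\phi(\gamma)\|\), so \(W\) is an \(L^2\)-isometry with \(W^*\) as its two-sided inverse.

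The substantive step is the calculation \(N^R = W T^R W^*\). Substituting \(W^*\psi\) for \(\phi\) in \eqref{eq:7} and pulling the operator \(U_{\delta(\gamma)}\) outside the integral yields
\[
U_{\delta(\gamma)}\int_G \pi(\alpha_\gamma\inv(f(\gamma\eta\inv)))\,U_{\delta(\eta)}^*\psi([\eta])\,\rho^u(\eta)\poshalf\rho^u(\gamma)\neghalf\, d\lambda_u(\eta).
\]
Inverting the integration variable converts the integral over \(G_u\) (with measure \(\lambda_u\)) into one over \(G^u\) (with measure \(\lambda^u\)), and then left-translation by \(\gamma\), which is valid by left-invariance of the Haar system, converts it into an integral over \(G^{r(\gamma)}\) against \(\lambda^{r(\gamma)}\). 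The combined effect is to replace the old \(\eta\) by \(\eta\inv\gamma\), so that \(f(\gamma\eta\inv)\) becomes \(f(\eta)\), \(U_{\delta(\eta)}^*\psi([\eta])\) becomes \(U_{\delta(\eta\inv\gamma)}^*\psi([\eta\inv\gamma])\), and \(\rho^u(\eta)\) becomes \(\rho^u(\eta\inv\gamma)\); this matches \eqref{eq:9} term for term.

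The principal obstacle is just keeping careful track of all the Haar-system identities and making sure the cocycle \(\delta\) and the modular adjustments in \(\rho^u\) fit together correctly under the substitution. Because the two substitutions (inversion, then left-translation) involve no Jacobian beyond what is already encoded in \(\rho^u\), the calculation is really a matter of bookkeeping rather than a conceptual difficulty; measurability issues are minor, since \(\delta\) being Borel is enough to stay inside \(\mcal{L}_U^2\) and \(L^2(G_u/S_u,\mcal{H},\sigma^u)\) throughout.
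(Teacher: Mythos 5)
Your proposal is correct and follows the same route as the paper: the paper defines the unitary by $W\phi([\gamma]) = \phi(c([\gamma]))$, which coincides with your $U_{\delta(\gamma)}\phi(\gamma)$ since $\gamma = c([\gamma])\delta(\gamma)$ and $\phi$ satisfies the covariance condition. The paper leaves the verification that $W$ is unitary and the change-of-variables computation for $N^R = WT^RW^*$ as unstated ``computations,'' and your substitution $\eta\mapsto\eta\inv\gamma$ (inversion followed by left translation) supplies exactly those details.
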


\begin{proof}
Define $W:L_U^2(G_u,\mcal{H},\sigma^u)\rightarrow
L^2(G_u/S_u,\mcal{H},\sigma^u)$ by $W(\phi)([\gamma]) =
\phi(c([\gamma]))$ where $c$ is the Borel cross section described
previously.  It follows from a brief computation that $W$ is a
unitary and as such we can use it to define the representation $N^R =
W T^R W^*$.  The fact that $N^R$ is given by \eqref{eq:9} follows from
another computation. 
\end{proof}

\begin{remark}
Before we move forward we need some more measure theoretic trickery.
Observe that because $G_u$ is second countable, the range map
factors to a Borel isomorphism between $G_u/S_u$ and $G\cdot u$.
We use this isomorphism to push the measure $\sigma^u$
forward to a measure $\sigma^u_*$ on $G\cdot u$.  
It is clear that by identifying 
$L^2(G_u/S_u,\mcal{H},\sigma^u)$ and  ${L^2(G\cdot u,\mcal{H},\sigma^u_*)}$
we can view $N^R$ as a representation on the latter
space.  It is easy to see that in this case the action of $N^R$ is given by 
\begin{align*}
N^R(f)(\phi)(\gamma\cdot u) &= \int_G
U_{\delta(\gamma)}\pi(\alpha_\gamma\inv(f(\eta)))U_{\delta(\eta\inv\gamma)}^*
\phi(\eta\inv\gamma\cdot u) \ldots\\
&\hspace{.4in}\ldots
\rho^u(\eta\inv\gamma)\poshalf \rho^u(\gamma)\neghalf
d\lambda^{r(\gamma)}(\eta)
\end{align*}
Since this identification is fairly natural, we won't make much of
a fuss about it.  
\end{remark}

The reason we went through the effort to build $N^R$ is that, as the
next lemma demonstrates, it interfaces nicely with the multiplication
representation of $C^b(G\cdot u)$ on $L^2(G\cdot u,\mcal{H})$.  We
will be able to take advantage of this later on.  

\begin{lemma}
\label{lem:34}
Suppose $(A,G,\alpha)$ is a groupoid dynamical system with stabilizer
subgroupoid $S$.  Let
$u\in G\unit$ and $R=\pi\rtimes U$ be a representation of $A(u)\rtimes
S_u$.  Consider the representation of $C_0(G\unit)$ on $L^2(G\cdot u,
\mcal{H}, \sigma^u_*)$ defined via
\[
N^u(f)\phi(v) = f(v)\phi(v).
\]
Furthermore, given $f\in C_0(G\unit)$ and $g\in \Gamma_c(G,r^*\mcal{A})$
define $f\cdot g(\gamma) := f(r(\gamma))g(\gamma)$.  Then
$N^u(f)N^R(g) = N^R(f\cdot g)$ for all $f\in C_0(G\unit)$ and
$g\in\Gamma_c(G,r^*\mcal{A})$.  
\end{lemma}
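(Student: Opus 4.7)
The plan is to verify the identity by a direct calculation using the explicit formula for $N^R$ on $L^2(G\cdot u,\mcal{H},\sigma^u_*)$ given in the remark immediately preceding the lemma. The key observation, which turns the argument into bookkeeping, is that the measure $\lambda^{r(\gamma)}$ appearing in the formula for $N^R(g)(\phi)(\gamma\cdot u)$ is supported on $\{\eta\in G : r(\eta)=r(\gamma)\}$, so for integration purposes we may treat $r(\eta)$ and $r(\gamma)$ as equal.

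First, I would expand $N^u(f)N^R(g)(\phi)(\gamma\cdot u)$: by the definition of $N^u$ this equals
\[
f(\gamma\cdot u)\int_G U_{\delta(\gamma)}\pi(\alpha_\gamma\inv(g(\eta)))U^*_{\delta(\eta\inv\gamma)}\phi(\eta\inv\gamma\cdot u)\rho^u(\eta\inv\gamma)\poshalf\rho^u(\gamma)\neghalf d\lambda^{r(\gamma)}(\eta),
\]
and since $\gamma\cdot u = r(\gamma)$ the prefactor is $f(r(\gamma))$. Next I would pull this scalar inside the integral and, using that $r(\eta) = r(\gamma)$ on the support of $\lambda^{r(\gamma)}$, rewrite it as $f(r(\eta))$. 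Since $\alpha_\gamma\inv$ and $\pi$ are linear, the scalar $f(r(\eta))$ slides through both operators to land directly against $g(\eta)$, giving
\[
f(r(\eta))\pi(\alpha_\gamma\inv(g(\eta))) = \pi(\alpha_\gamma\inv(f(r(\eta))g(\eta))) = \pi(\alpha_\gamma\inv((f\cdot g)(\eta)))
\]
by the very definition of the $C_0(G\unit)$-module action on $\Gamma_c(G,r^*\mcal{A})$. Substituting back into the integral produces exactly the formula for $N^R(f\cdot g)(\phi)(\gamma\cdot u)$.

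There is no genuine obstacle here; the only points demanding care are (i) justifying that the scalar $f(r(\gamma))$ may be replaced by $f(r(\eta))$ under the integral, which is immediate from the support of $\lambda^{r(\gamma)}$, and (ii) recognizing that the $C_0(G\unit)$-module structure on the section algebra is given by the stated formula $(f\cdot g)(\eta) = f(r(\eta))g(\eta)$. Everything else, including the untouched factors $U_{\delta(\gamma)}$, $U^*_{\delta(\eta\inv\gamma)}$, $\phi(\eta\inv\gamma\cdot u)$, and the Radon--Nikodym factors involving $\rho^u$, plays no role in the manipulation and carries through unchanged.
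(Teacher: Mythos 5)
Your proposal is correct and is essentially the paper's own argument: the paper simply asserts that ``the last statement follows from a computation,'' and the computation you carry out --- identifying $f(\gamma\cdot u)=f(r(\gamma))$, replacing it by $f(r(\eta))$ on the support of $\lambda^{r(\gamma)}$, and sliding the scalar through $\pi$ and $\alpha_\gamma\inv$ to form $(f\cdot g)(\eta)$ --- is exactly that omitted calculation. Nothing further is needed.
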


\begin{proof}
The representation $N^u$ is nothing more than the restriction map sending
$C_0(G\unit)$ to $C^b(G\cdot u)$ composed with the usual multiplication
representation of $C^b(G\cdot u)$ on $L^2(G\cdot u,\mcal{H})$.  It is
easy to see that if $f$ and $g$ are as above then 
$f\cdot g\in \Gamma_c(G,r^*\mcal{A})$.  The last statement follows
from a computation. 
\end{proof}

We can now prove the following proposition, which tells us that the
equivalence classes on $A\rtimes S$ induced by $\Phi$ are exactly the orbits of
the $G$ action described in Corollary \ref{cor:1}.  

\begin{prop}
\label{prop:104}
Suppose $(A,G,\alpha)$ is a groupoid dynamical system and that the
stabilizer subgroupoid $S$ has a Haar system.  Fix
$u\in G\unit$ and let 
$R$ be an irreducible representation of $A(u)\rtimes S_u$ on a
separable Hilbert space $\mcal{H}$. Then
$\Phi(R)$ is equivalent to $\Phi(\gamma\cdot R)$ for all
$\gamma\in G_u$.   Furthermore, if $G$ is regular and $L$ and $R$
are irreducible representations of $A(u)\rtimes S_u$ and $A(v)\rtimes
S_v$, respectively, 
then $\Phi(L)$ is equivalent to $\Phi(R)$ if and only if there
exists $\gamma\in G_u$ such that $\gamma\cdot L$ is equivalent to $R$.  
\end{prop}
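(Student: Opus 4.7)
To prove the first assertion, fix $\gamma \in G_u$, set $v = r(\gamma)$, and write $R = \pi \rtimes U$, so that by Corollary~\ref{cor:1}, $\gamma \cdot R = \rho \rtimes V$ with $\rho = \pi \circ \alpha_\gamma\inv$ and $V_s = U_{\gamma\inv s \gamma}$. I would realize both induced representations concretely via Proposition~\ref{prop:6} as $N^R$ on $L^2(G_u/S_u, \mcal{H}, \sigma^u)$ and $N^{\gamma \cdot R}$ on $L^2(G_v/S_v, \mcal{H}, \sigma^v)$. Right multiplication $\eta \mapsto \eta \gamma\inv$ carries $G_u$ onto $G_v$ and factors to a Borel isomorphism $G_u/S_u \to G_v/S_v$, since $\gamma S_u \gamma\inv = S_v$. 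After adjusting by the Radon--Nikodym derivative relating the quasi-invariant measures $\sigma^u$ and $\sigma^v$ on the common orbit $G \cdot u$, and by a unitary factor $U_{\delta(\cdot)}$ to account for the differing Borel cross sections entering the construction of $N^R$ and $N^{\gamma \cdot R}$, this bijection lifts to a unitary $W$ between the two $L^2$-spaces. Verifying that $W$ intertwines $N^R$ and $N^{\gamma \cdot R}$ is then a tedious but direct substitution into \eqref{eq:9}, using the cocycle identity for $\omega$ and the equivariance \eqref{eq:2} of the modular functions under conjugation.

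For the second assertion, the ``if'' direction is immediate: if $\gamma \cdot L$ is equivalent to $R$ then $\Phi(\gamma \cdot L)$ is equivalent to $\Phi(R)$, and by the first assertion $\Phi(\gamma \cdot L)$ is equivalent to $\Phi(L)$. For ``only if'' I would proceed in two stages. To match the orbits, realize $\Phi(L) = N^L$ on $L^2(G \cdot u, \mcal{H}_L, \sigma^u_*)$ and $\Phi(R) = N^R$ on $L^2(G \cdot v, \mcal{H}_R, \sigma^v_*)$ as in the remark following Proposition~\ref{prop:6}. Lemma~\ref{lem:34} shows that each of these, restricted to the canonical copy of $C_0(G\unit) \subset M(A \rtimes G)$, becomes the multiplication representation on the respective orbit. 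Any intertwiner between $\Phi(L)$ and $\Phi(R)$ must therefore intertwine these abelian subrepresentations, and since $\sigma^u_*$ and $\sigma^v_*$ have full support on their respective orbits the kernels of the $C_0(G\unit)$-actions must agree; this forces $\overline{G \cdot u} = \overline{G \cdot v}$, and regularity of $G$ upgrades this to $G \cdot u = G \cdot v$, so there exists $\eta \in G$ with $s(\eta) = u$ and $r(\eta) = v$.

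The second stage is the principal obstacle. Using the first assertion to replace $L$ with $\eta \cdot L$, both representations now live over the same unit, and it suffices to show that if $L', R$ are irreducible representations of $A(v) \rtimes S_v$ with $\Ind L'$ equivalent to $\Ind R$, then there exists $s \in S_v$ such that $s \cdot L'$ is equivalent to $R$; the required element of $G_u$ is then $s\eta$. The intertwining unitary between the concrete realizations of $\Ind L'$ and $\Ind R$ commutes with the common multiplication action of $C_0(G\unit)$, so by direct integral theory it decomposes as a measurable field of unitaries $\{U_{[\zeta]}\}_{[\zeta] \in G_v/S_v}$ between the fibre Hilbert spaces. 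Tracking how \eqref{eq:9} transforms this field along a Borel cross section through the distinguished coset $[v]$, together with the irreducibility of $\Ind L'$ and $\Ind R$ granted by \cite[Proposition 4.13]{inducpaper} and a Schur-type argument, should produce the desired $s \in S_v$ and an intertwiner of $s \cdot L'$ with $R$. Making this pointwise evaluation rigorous against the underlying measurable structure is the main technical hurdle.
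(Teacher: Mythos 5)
Your handling of the first assertion and of the orbit-matching step is essentially sound and close to the paper's argument: the paper builds the intertwiner for the first claim directly on $T^R$ rather than $N^R$ (which avoids the cross-section correction $U_{\delta(\cdot)}$ you need), and for the orbit-matching it cites a disjointness result for the multiplication representations \cite[Lemma 4.15]{primtrangroup} rather than comparing the kernels of the $C_0(G\unit)$-actions and using local closedness of orbits; both of your variants work.

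The genuine gap is your ``second stage.'' Once you have $\eta$ with $s(\eta)=u$ and $r(\eta)=v$, the representations $\eta\cdot L$ and $R$ are irreducible representations of the \emph{same} fibre $A(v)\rtimes S_v$ whose inductions are equivalent, and what you need is exactly that, for regular $G$, induction from a fixed stabilizer is injective on equivalence classes of irreducible representations. That is \cite[Proposition 4.13]{inducpaper} --- the same result already invoked in the proof of Proposition \ref{prop:1} for irreducibility --- and the paper simply cites it to conclude $R\cong\eta\cdot L$. You instead set out to reprove this from scratch via a direct-integral decomposition of the intertwiner and a ``Schur-type argument,'' and you concede yourself that making the pointwise evaluation rigorous is ``the main technical hurdle''; as written, this step is not a proof. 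Note also that your formulation of the remaining goal is circular: for $s\in S_v$ one always has $s\cdot L'\cong L'$ (conjugation by a stabilizer element is implemented by the unitary $U_s$ of the covariant pair, as the paper observes in the proof of Lemma \ref{lem:35}), so ``find $s\in S_v$ with $s\cdot L'\cong R$'' is literally the statement $L'\cong R$, and the extra conjugation buys nothing. Recognizing that the remaining claim is precisely the injectivity of $\Ind_{S_v}^G$ on a single fibre should have sent you to the prequel's Proposition 4.13 rather than to a new measurable-field argument.
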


\begin{proof}
Let $R=\pi\rtimes U$ be as above and recall that $\gamma\cdot R =
\rho\rtimes V$ is given by Corollary \ref{cor:1}.  It follows from
Proposition \ref{prop:4} that it suffices to show $T^R$ and
$T^{\gamma\cdot R}$ are equivalent.  Suppose $u=s(\gamma)$,
$v=r(\gamma)$, and define
$W:L^2_U(G_u,\mcal{H},\sigma^u)\rightarrow
L^2_V(G_v,\mcal{H},\sigma^v)$ by 
\[
W(\phi)(\eta) = \omega(\gamma)\poshalf\rho^u(\eta\gamma)\poshalf
\rho^v(\eta)\neghalf f(\eta\gamma)\quad\text{for $\eta\in G_v$.}  
\]
The fact that $W$ is a unitary
which intertwines $T^R$ and $T^{\gamma\cdot R}$ now follows from a
relatively straightforward series of computations. 

\begin{remark}
Those readers wishing to verify these calculations should make note of
the fact that for $\gamma\in G$ as above
\begin{equation}
\label{eq:144}
\int_{G_v/S_v}\phi([\eta\gamma])\omega(\gamma)\rho^u(\eta \gamma)
\rho^v(\eta)\inv d\sigma^v([\eta]) = 
\int_{G_u/S_u}\phi([\eta])d\sigma^u([\eta]).
\end{equation}

\end{remark}

Moving on, suppose $G$ is regular and that we are given $L$ and $R$ as
in the second half of the proposition.  If $\Phi(L)$ is equivalent to
$\Phi(R)$ then it follows from Proposition \ref{prop:6} that $N^R$ is
equivalent to $N^L$.  Let $W$ be the intertwining unitary and let
$N^u$ and $N^v$ be as in Lemma \ref{lem:34}.  We compute 
\begin{align*}
W N^v(f)N^R(g)h &= WN^R(f\cdot g)h = N^L(f\cdot g)Wh \\
&= N^u(f)N^L(g)Wh = N^u(f)WN^R(g)h.
\end{align*}
Since $N^R$ is nondegenerate, this implies that $N^v$ is unitarily
equivalent to $N^u$.  However, if $G\cdot u \cap G\cdot v = \emptyset$
then \cite[Lemma 4.15]{primtrangroup} implies that $N^u$ and $N^v$ can
have no equivalent subrepresentations.  Hence $G\cdot u = G\cdot v$
and there exists $\gamma$ such that $v = \gamma\cdot u$.  Then $R$ and
$\gamma\cdot L$ are both irreducible representations of $A(v)\rtimes
S_v$ and we assumed that $\Phi(R)$ is equivalent to $\Phi(L)$, which is in
turn equivalent to $\Phi(\gamma\cdot L)$ by the above.  It then follows
from \cite[Proposition 4.13]{inducpaper} that $R$ is equivalent
to $\gamma\cdot L$ and we are done. 
\end{proof}

\subsection{Restriction to the Stabilizers}

Now that we know which representations have the same image under
$\Phi$ it is time to show that $\Phi$ is open.  The key construction
is a restriction process from $A\rtimes G$ to $A\rtimes S$.  This is
defined using the following map. 

\begin{prop}
\label{prop:101}
Suppose $(A,G,\alpha)$ is a groupoid dynamical system and the
stabilizer subgroupoid $S$ has a Haar system.  Then
there is a nondegenerate homomorphism $M:A\rtimes S\rightarrow
M(A\rtimes G)$ such that 
\begin{equation}
M(f)g(\gamma) = \int_S
f(s)\alpha_s(g(s\inv\gamma))d\beta^{r(\gamma)}(s)
\end{equation}
for $f\in \Gamma_c(S,p^*\mcal{A})$ and $g\in
\Gamma_c(G,r^*\mcal{A})$.  
\end{prop}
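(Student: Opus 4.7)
The plan is to proceed in four stages: well-definition of $M(f)g$ as a compactly supported section, verification that $f\mapsto M(f)$ is a $*$-algebra map on $\Gamma_c(S,p^*\mcal{A})$ landing in the adjointable operators on $A\rtimes G$, a $C^*$-style boundedness argument to extend $M$ to all of $A\rtimes S$, and finally nondegeneracy via an approximate identity.

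For well-definition, observe that any $s$ appearing in the integral satisfies $p(s)=r(\gamma)$, so $r(s)=s(s)=r(\gamma)$ and $\alpha_s$ is an automorphism of $A(r(\gamma))$; the integrand therefore lies in $A(r(\gamma))$, as does $M(f)g(\gamma)$. The integration is effectively over the compact set $(\supp f)\cap S_{r(\gamma)}$, and continuity of $\alpha$, of $g$, and of the Haar system $\beta$ on $S$ makes $\gamma\mapsto M(f)g(\gamma)$ continuous, with support contained in the compact set $(\supp f)(\supp g)\subset G$. In particular, each $M(f)$ is a linear endomorphism of $\Gamma_c(G,r^*\mcal{A})$. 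It is worth noting that the formula is simply left convolution by $f$ once $f$ is viewed as supported on the closed subgroupoid $S\subset G$; the subsequent algebraic identities therefore flow from the usual convolution calculus, albeit needing case-by-case verification because $S$ and $G$ carry different Haar systems.

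Next, a Fubini argument gives $M(f)(g*h)=(M(f)g)*h$ so that each $M(f)$ is right $A\rtimes G$-linear on $\Gamma_c(G,r^*\mcal{A})$; a second Fubini, together with the conjugation identity \eqref{eq:2b} and the definition of the involution on $A\rtimes S$, yields $\langle M(f)g,h\rangle=\langle g,M(f^*)h\rangle$, so $M(f)$ is adjointable with adjoint $M(f^*)$. A parallel Fubini computation establishes the multiplicative identity $M(f_1*f_2)=M(f_1)M(f_2)$ for $f_1,f_2\in\Gamma_c(S,p^*\mcal{A})$. Combined with obvious linearity, these show $M$ is a $*$-algebra homomorphism from $\Gamma_c(S,p^*\mcal{A})$ into the pre-$C^*$-algebra of adjointable operators on $\Gamma_c(G,r^*\mcal{A})$.

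To get boundedness we mimic the argument of Proposition \ref{prop:65}: extend $M$ to the unitization $(A\rtimes S)^1$ by sending $1$ to the identity multiplier; then positivity of $\|f\|^2\cdot 1-f^*f$ in $(A\rtimes S)^1$ yields $\xi\in(A\rtimes S)^1$ with $\xi^*\xi=\|f\|^2\cdot 1-f^*f$, and
\[
\|f\|^2\langle g,g\rangle-\langle M(f)g,M(f)g\rangle=\langle M(\xi)g,M(\xi)g\rangle\ge 0,
\]
so $\|M(f)g\|\le\|f\|\,\|g\|$. Thus each $M(f)$ extends to a bounded adjointable operator on $A\rtimes G$, i.e.\ a multiplier, and density of $\Gamma_c(S,p^*\mcal{A})$ in $A\rtimes S$ lets $M$ extend continuously to a $*$-homomorphism $A\rtimes S\to M(A\rtimes G)$. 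For nondegeneracy, choose an approximate identity $\{e_i\}\subset\Gamma_c(S,p^*\mcal{A})$ for $A\rtimes S$ whose supports shrink toward $S\unit$; a standard inductive-limit-topology computation shows $M(e_i)g\to g$ for $g\in\Gamma_c(G,r^*\mcal{A})$, which is enough by density. The main technical obstacle is the adjoint identity of step two: verifying $\langle M(f)g,h\rangle=\langle g,M(f^*)h\rangle$ directly from the integral formulas requires careful bookkeeping of the modular function $\Delta^u$ and of the interplay between the Haar systems on $S$ and on $G$ encoded in \eqref{eq:2b}.
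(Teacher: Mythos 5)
Your overall architecture (well-definedness, convolution identities, boundedness, nondegeneracy) matches the paper's, and the first two stages are fine, but the boundedness step has a genuine gap. The positivity trick you borrow from Proposition \ref{prop:65} requires applying $M$ to the element $\xi\in(A\rtimes S)^1$ with $\xi^*\xi=\|f\|^2\cdot 1-f^*f$. That $\xi$ is produced by continuous functional calculus in the completed $C^*$-algebra $(A\rtimes S)^1$; there is no reason for it to lie in $\Gamma_c(S,p^*\mcal{A})^1$, which is the only place $M$ has been defined at that point. So the expression $\langle M(\xi)g,M(\xi)g\rangle$ is not yet meaningful, and any attempt to reach it by approximating $\xi$ from $\Gamma_c(S,p^*\mcal{A})$ presupposes continuity of $M$ in the universal norm --- exactly what is being proved. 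The situation in Proposition \ref{prop:65} is different in kind: there the map $\Phi$ is given by an explicit formula valid for \emph{every} $\phi\in C_0(S\unit)^1$, so the square root is in the domain from the outset. This is the standard obstruction to bounding a $*$-homomorphism defined only on a dense $*$-subalgebra, and it is why the paper takes a different route: given a state $\rho$ on $A\rtimes G$, it forms the GNS-type Hilbert space $\mcal{H}_\rho$, checks that $f\mapsto M(f)$ satisfies the hypotheses of the Disintegration Theorem (density of the vectors $M(f)g$, inductive-limit continuity of $f\mapsto(\pi(f)g,h)_\rho$, and the symmetry coming from $(M(f)g)^**h=g^**(M(f^*)h)$), and concludes from that theorem that $\pi$ is dominated by the universal norm of $A\rtimes S$. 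Some such appeal to a representation-theoretic boundedness result is unavoidable here.

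A secondary issue: the density of $\{M(f)g\}$ in $\Gamma_c(G,r^*\mcal{A})$ (the paper's Lemma \ref{lem:3}) is doing double duty --- it is a hypothesis of the Disintegration Theorem and it gives nondegeneracy --- and it is not a one-line computation. Your "approximate identity with supports shrinking toward $S\unit$" is the right idea, but making it work requires the uniform estimates $\|\alpha_s(g(s\inv\gamma))-g(\gamma)\|<\epsilon/2$ for $s$ near the units and $\|a_{l_0}(r(\gamma))\alpha_s(g(s\inv\gamma))-\alpha_s(g(s\inv\gamma))\|<\epsilon/2$, each established in the paper by a compactness/subnet argument; you should not wave this away as standard, since the uniformity in $\gamma$ over the noncompact groupoid $G$ is the whole point.
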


\begin{proof}
Since $M$ is basically defined via convolution it is easy to show that
$M(f)g$ is a continuous compactly supported section.  Some lengthy
computations, which we omit for brevity, show that for 
$f\in \Gamma_c(G,r^*\mcal{A})$ and $g,h\in \Gamma_c(S,p^*\mcal{A})$
\begin{equation}
\label{eq:10}
M(f)(g*h) = M(f)g*h,\quad\text{and}\quad (M(f)g)^* * h = g^* *(M(f^*) h).
\end{equation}
The challenging part is proving the following lemma.  However, since
the proof is long and unenlightening, it has be relegated to the end of
the section.  

\begin{lemma}
\label{lem:3}
The set of functions of the form $M(f)g$ with $f\in \Gamma_c(S,p^*\mcal{A})$ and
$g\in \Gamma_c(G,r^*\mcal{A})$ is dense in $\Gamma_c(G,r^*\mcal{A})$
with respect to the inductive limit topology.  
\end{lemma}

Now, we want to show that $M(f)$ is bounded so that it extends to a
multiplier on $A\rtimes G$. Let $\rho$ be a state on $A\rtimes G$ and define
an inner product on $A\rtimes G$ via 
$(f,g)_\rho = \rho(\langle f, g\rangle)$
where we give $A\rtimes G$ its usual inner-product as an $A\rtimes G$-module.
Let $\mcal{H}_\rho$ be the Hilbert space completion of
$A\rtimes G$ with respect to this pre-inner product.
We would like to apply the Disintegration Theorem \cite[Theorem
7.8]{renaultequiv}
when $\mcal{H}_0$ is the image of $\Gamma_c(G,r^*\mcal{A})$ in
$\mcal{H}_\rho$.  Define $\pi$ on $\mcal{H}_0$ by 
\[
\pi(f)g = M(f)g
\]
for $f\in \Gamma_c(S,p^*\mcal{A})$ and $g\in
\Gamma_c(G,r^*\mcal{A})$. 
It is easy to show that $\pi(f)$ is well defined and that $\pi$ is
a homomorphism from $\Gamma_c(S,p^*\mcal{A})$
to the algebra of linear operators on $\mcal{H}_0$.  
It follows from Lemma \ref{lem:3} that elements of the form $\pi(f)g$ 
are dense in $\mcal{H}_\rho$.
Fix $g,h\in \Gamma_c(G,r^*\mcal{A})$.  We would like to see that
$f\mapsto (\pi(f)g,h)_\rho$ is continuous with respect to the inductive
limit topology.  It suffices to see that the map $f\mapsto M(f)g$ is
continuous with respect to the inductive limit topology and this
is not hard to prove. Finally, the
fact that $(\pi(f)g,h)_\rho = (g,\pi(f^*)h)_\rho$ follows immediately from the
fact that $(M(f)g)^**h = g^**(M(f^*)h)$.  Thus the Disintegration
Theorem implies $\pi$ extends to a representation of
$A\rtimes G$.  In particular, we have 
\[
\rho(\langle M(f)g,M(f)g\rangle) = (\pi(f)g,\pi(f)g)_\rho \leq \|f\|^2
(g,g)_\rho \leq \|f\|^2 \|g\|^2.
\]
By choosing $\rho$ such that $\rho(\langle M(f)g,M(f)g\rangle) =
\|M(f)g\|^2$ we conclude $\|M(f)g\|\leq \|f\|\|g\|$.  Thus $M(f)$
is bounded and it follows from \eqref{eq:10} that $M(f)$
is $A\rtimes G$-linear and  adjointable with adjoint $M(f^*)$.  
Hence $M(f)$ extends to a multiplier on $A\rtimes G$.  What's more, 
$\|M(f)\|\leq \|f\|$ so that $M$ extends to all of 
$A\rtimes S$.  It is then easy to
show that $M$ is a homomorphism on a dense subspace so that it must be
a homomorphism everywhere.  Finally, the fact that $M$ is nondegenerate
follows from Lemma \ref{lem:3}.
\end{proof}

The point is that nondegenerate maps into multiplier algebras yield
continuous restriction processes through the usual general nonsense 
\cite{tfb}, as stated in the following 

\begin{corr}
\label{cor:15}
Suppose $(A,G,\alpha)$ is a groupoid dynamical system and that the
stabilizer subgroupoid $S$ has a Haar system.  
Then there exists a restriction map $\Res_M:\mcal{I}(A\rtimes G)\rightarrow
\mcal{I}(A\rtimes S)$ such that $\Res_M$ is continuous and is
characterized by $\Res_M(\ker R) = \ker \overline{R}\circ M$ for all
representations $R$ of $A\rtimes G$. 
\end{corr}

This next lemma demonstrates the relationship between induction and
this restriction process. 

\begin{lemma}
\label{lem:35}
Suppose $(A,G,\alpha)$ is a groupoid dynamical system and that the
stabilizer subgroupoid $S$ has a Haar system.  Then
given $u\in G\unit$ and an irreducible representation $R$ of 
$A(u)\rtimes S_u$ we have 
\begin{equation}
\label{eq:140}
\Res_M \ker \Ind_{S_u}^G R = \bigcap_{\gamma\in G_u} \ker(\gamma\cdot R).
\end{equation}
\end{lemma}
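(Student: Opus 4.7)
The plan is to compute the representation $\overline{\Ind_{S_u}^G R}\circ M$ of $A\rtimes S$ in the concrete model $T^R$ of Proposition \ref{prop:4}, show it decomposes fiber-wise over $G_u/S_u$ with the fiber at $[\gamma]$ being the representation $\gamma\cdot R$ (viewed as a rep of $A\rtimes S$ via Remark \ref{rem:1}), and then use a continuity-plus-full-support argument together with Corollary \ref{cor:15} to deduce \eqref{eq:140}.

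The fiber-wise structure is established by a direct computation. Starting from \eqref{eq:7} and the convolution definition of $M(f)g$ in Proposition \ref{prop:101}, expand $T^R(M(f)g)\phi(\gamma)$ as an iterated integral over $S_{r(\gamma)}\times G_u$, and use $\alpha_{\gamma\inv s}=\alpha_{\gamma\inv s\gamma}\alpha_{\gamma\inv}$ for $s\in S_{r(\gamma)}$ together with the covariance $\pi\circ\alpha_t=U_t\pi(\cdot)U_t^*$ for $t=\gamma\inv s\gamma\in S_u$. The substitution $s=\gamma t\gamma\inv$ via \eqref{eq:2b} converts the inner integral to one over $S_u$ and brings in the factor $\omega(\gamma)$. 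A further substitution $\eta=\eta' t\inv$ in the outer $G_u$-integral exploits right-$S_u$-invariance of $\lambda_u$, the equivariance $\phi(\eta' t\inv)=U_t\phi(\eta')$, and $\rho^u(\eta' t\inv)=\Delta^u(t)\inv\rho^u(\eta')$ from \eqref{eq:4b}; the $U_t^*$ from covariance cancels against the $U_t$ from equivariance, and the integral separates into a fiber-wise operator $\Xi_\gamma(f)$ acting on $T^R(g)\phi(\gamma)$. Careful bookkeeping of the modular factors, together with \eqref{eq:2}, identifies $\Xi_\gamma$ with the representation of $A\rtimes S$ lifting $\gamma\cdot R$ from Corollary \ref{cor:1} via Remark \ref{rem:1}; in particular $\ker\Xi_\gamma=\ker(\gamma\cdot R)$ as ideals of $A\rtimes S$.

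It follows that $f\in\ker\overline{T^R}\circ M$ if and only if $f\in\ker(\gamma\cdot R)$ for $\sigma^u$-almost every $[\gamma]\in G_u/S_u$. The map $[\gamma]\mapsto \gamma\cdot R\in(A\rtimes S)\sidehat$ is continuous by Proposition \ref{prop:3} and Corollary \ref{cor:1} (it factors through $G_u/S_u$ since for $t\in S_u$, $t\cdot R$ is unitarily equivalent to $R$ via $U_t$), and $L\mapsto\|L(f)\|$ is lower semicontinuous on $(A\rtimes S)\sidehat$, so $\{[\gamma]:f\notin\ker(\gamma\cdot R)\}$ is open in $G_u/S_u$; full support of $\sigma^u$ (noted after Lemma \ref{lem:4}) then forces this set to be empty, giving $f\in\ker(\gamma\cdot R)$ for every $\gamma\in G_u$. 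Combined with Corollary \ref{cor:15}, this yields \eqref{eq:140}. The main obstacle is the bookkeeping in the second paragraph: the weights $\omega(\gamma)$ from \eqref{eq:2b}, the $\rho^u$-factors in \eqref{eq:7}, and the $\Delta^u$ arising from the $\eta'$-substitution must conspire so that $\Xi_\gamma$ is identifiable with the integrated form of $\gamma\cdot R$, with no leftover modular-function twist that could shift the kernel.
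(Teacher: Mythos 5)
Your proposal is correct and follows essentially the same route as the paper: the paper likewise shows that $\overline{\Ind_{S_u}^G R}\circ M$ is the direct integral $\int_{G_u/S_u}^\oplus c([\gamma])\cdot R\,d\sigma^u([\gamma])$ (working in the model $N^R$ on $L^2(G_u/S_u,\mcal{H},\sigma^u)$ rather than your $T^R$, a cosmetic difference), and then upgrades ``$\gamma\cdot R(f)=0$ almost everywhere'' to ``everywhere'' using continuity of the $G$-action on $(A\rtimes S)\sidehat$ and full support of the measure. The only divergence is that the paper runs the null-set argument on $G_u$ with $\lambda_u$ after an explicit separability argument with $g_i\otimes h_j$, while you stay on $G_u/S_u$ with $\sigma^u$; both are fine, and the modular-factor bookkeeping you flag does indeed close up as you expect.
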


\begin{proof}
Suppose $R=\pi\rtimes U$ is as above.  Recall from Proposition
\ref{prop:6} that $\Ind_{S_u}^G R$ is equivalent to $N^R$ and let $Q =
\overline{N^R}\circ M$ so that $\Res_M\ker\Ind_{S_u}^G R = \ker
Q$. Now, given $f\in A\rtimes S$ it is
straightforward to show that the collection $\{c([\gamma])\cdot
R(f)\}$ is a Borel field of operators on the trivial bundle
$G_u/S_u\times \mcal{H}$ and that we can form the direct integral
representation $\int_{G_u/S_u}^\oplus c([\gamma])\cdot R\;
d\sigma^u([\gamma])$.  It then follows from a fairly hideous
computation that $Q = \int_{G_u/S_u}^\oplus c([\gamma])\cdot R\;
d\sigma^u([\gamma])$. Hence for $f\in A\rtimes S$ and $\phi\in
\mcal{L}^2(G_u/S_u,\mcal{H},\sigma^u)$ we have
\begin{equation}
\label{eq:133}
Q(f)\phi([\gamma]) = (c([\gamma])\cdot R)(f)\phi([\gamma]).
\end{equation}

Now suppose $f\in A\rtimes S$ and $Q(f) =
0$.  Let $\{g_i\}\in C_c(G_u/S_u)$ be a countable set of functions which
separate points and let $h_j$ be a countable basis for $\mcal{H}$.  
For each $g_i$ and $h_j$ \eqref{eq:133} implies  
\begin{equation}
\label{eq:142}
(c([\gamma])\cdot R)(f)(g_i\otimes h_j)([\gamma]) = 
g_i([\gamma])(c([\gamma])\cdot R)(f)h_j = 0
\end{equation}
for all $[\gamma]\not\in N_{ij}$ where $N_{ij}$ is a $\sigma^u$-null
set.  Let $N = \bigcup_{ij} N_{ij}$ and observe that given
$[\gamma]\not\in N$ \eqref{eq:142} holds for all $i$ and $j$.  In
particular, we can pick $g_i$ so that $g_i([\gamma])\ne 0$ and conclude
that $(c([\gamma])\cdot R)(f) = 0$.  Thus $(c([\gamma])\cdot R)(f) =
0$ for all $[\gamma]\not\in N$.  It then follows from \eqref{eq:4b}
that  $(c([\gamma])\cdot R)(f) = 0$ for $\lambda_u$-almost every 
$\gamma\in G_u$.  

Next, suppose $s\in S_u$.  An elementary computation shows that $R$
and $s\cdot R$ are unitarily equivalent.  In particular
$\gamma\cdot R = c([\gamma])\cdot (\delta(\gamma)\cdot R) \cong
c([\gamma]) \cdot R$ and therefore the previous paragraph implies that
$\gamma\cdot R(f) = 0$ for $\lambda_u$-almost all $\gamma$.  Since $G$
acts continuously on $(A\rtimes S)\sidehat$, the map $\gamma\mapsto
\gamma\cdot R(f)$ is continuous.  Furthermore, $\supp \lambda_u = G_u$ and
$\gamma\cdot R(f) = 0$ for $\lambda_u$-almost every $\gamma\in G_u$ so
that we must have $\gamma\cdot R(f) = 0$ for all $\gamma\in G_u$.  Hence $\ker
Q \subset \bigcap_{\gamma\in G_u} \ker (\gamma\cdot R)$.  The other
inclusion is straightforward.
\end{proof}

We conclude the section with the promised proof of Lemma \ref{lem:3}.

\begin{proof}[Proof of Lemma \ref{lem:3}]
 Fix $\epsilon > 0$ and $g\in
\Gamma_c(G,r^*\mcal{A})$.  Let $K = r(\supp g)$ and choose some fixed
open neighborhood $U$ of $K$ in $S$.  We make the
following claim.  
\begin{claim}
There is a relatively compact open neighborhood $O$ of $K$ in $S$
such that $O\subset U$ and for all $\gamma\in G$ and $s\in O$
\begin{equation}
\label{eq:136}
\|\alpha_s(g(s\inv \gamma)) - g(\gamma)\| < \epsilon/2.
\end{equation}
\end{claim}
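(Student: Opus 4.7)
The plan is to combine two observations: first, when $s$ is a unit, $\alpha_s$ is the identity and $s^{-1}\gamma = \gamma$, so the quantity in \eqref{eq:136} vanishes identically; second, away from a certain compact set of $\gamma$'s, both $g(\gamma)$ and $g(s^{-1}\gamma)$ are zero. A compactness and upper semicontinuity argument near $K$ then produces the required uniform bound.

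First I would fix a relatively compact open neighborhood $W$ of $K$ in $S$ with $\overline{W} \subset U$ and form the set
\[
L = \supp g \,\cup\, \{\, s\gamma : s \in \overline{W},\ \gamma \in \supp g,\ p(s) = r(\gamma)\,\},
\]
which is compact in $G$ as the image of a closed subset of $\overline{W}\times\supp g$ under the continuous multiplication map. The point of $L$ is that if $s \in \overline{W}$, $p(s) = r(\gamma)$, and $\gamma \notin L$, then $\gamma \notin \supp g$ and also $s^{-1}\gamma \notin \supp g$ (otherwise $\gamma = s(s^{-1}\gamma) \in \overline{W}\cdot\supp g \subset L$), so the expression in \eqref{eq:136} is identically zero. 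Hence it suffices to control the norm on the compact fibered product $E = \{(s,\gamma) \in \overline{W}\times L : p(s) = r(\gamma)\}$.

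On $E$ the map $\Psi(s,\gamma) := \alpha_s(g(s^{-1}\gamma)) - g(\gamma)$ is a continuous section of $\mcal{A}$ by joint continuity of $\alpha$ and continuity of $g$, so $h(s,\gamma) := \|\Psi(s,\gamma)\|$ is upper semicontinuous on $E$. The decisive observation is that for any $u \in K$ with $(u,\gamma)\in E$, the element $u$ is a unit with $p(u)=u=r(\gamma)$, so $\alpha_u$ is the identity on $A(u)$ and $u^{-1}\gamma = \gamma$; therefore $h(u,\gamma) = 0$. Consequently $F := \{(s,\gamma) \in E : h(s,\gamma) \geq \epsilon/2\}$ is compact and disjoint from $K \times G$, and its first projection $\pi_1(F)$ is a compact subset of $\overline{W}$ disjoint from $K$. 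Setting $O := W \setminus \pi_1(F)$ gives a relatively compact open neighborhood of $K$ contained in $U$; by construction $h(s,\gamma) < \epsilon/2$ for $s \in O$ and $(s,\gamma)\in E$, and the bound is trivial for $\gamma \notin L$.

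The main obstacle is the third step: pointwise vanishing of $h$ along the unit direction must be promoted to a uniform estimate valid for all $\gamma$ simultaneously. This is exactly why the preliminary truncation to the compact set $L$ is necessary; once $\gamma$ is confined to a compact set, upper semicontinuity of the norm on $\mcal{A}$ together with compactness of $E$ lets the ``tube''-style argument go through.
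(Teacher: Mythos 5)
Your argument is correct, and it takes a genuinely different route from the paper's. The paper proves the claim by contradiction: assuming the estimate fails for every relatively compact neighborhood $W\subset U$ of $K$, it forms nets $\{s_W\}$ and $\{\gamma_W\}$ indexed by such neighborhoods ordered by reverse inclusion, observes that failure of the estimate forces $\gamma_W$ into the compact set $\overline{U}\,\supp g$ and $s_W$ into $\overline{U}$, passes to convergent subnets, notes that the limit of $s_W$ must lie in $K\subset G\unit$ (so $s_W^{-1}\gamma_W$ and $\gamma_W$ have the same limit), and contradicts the assumed lower bound using continuity of the action. You instead argue directly: after truncating to the compact set $L$, off which the expression vanishes identically, you exploit upper semicontinuity of $h(s,\gamma)=\|\alpha_s(g(s^{-1}\gamma))-g(\gamma)\|$ on the compact fibred product $E$, note that $h$ vanishes when the first coordinate is a unit, conclude that the closed set $F=\{h\ge\epsilon/2\}$ is compact and disjoint from $K\times G$, and remove its first projection to obtain the tube $O$. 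Both proofs ultimately rest on the same two compactness facts (compactness of $\overline{W}\cdot\supp g$ and of $\overline{W}$) and on upper semicontinuity of the norm on $\mcal{A}$, but yours makes the latter explicit and produces $O$ constructively in one pass, whereas the paper's net argument avoids introducing $E$ and its topology at the cost of two subnet extractions. Two small points you should make explicit: $\pi_1(F)$ is compact, hence closed in the Hausdorff space $S$, which is what makes $O=W\setminus\pi_1(F)$ open; and continuity of $(s,\gamma)\mapsto\alpha_s(g(s^{-1}\gamma))$ as a map into $\mcal{A}$ uses that the fibres match up, i.e.\ $r(s^{-1}\gamma)=p(s)=r(\gamma)$ for $s\in S$, so that the difference defining $\Psi$ is taken within the single fibre $A(r(\gamma))$.
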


\begin{proof}[Proof of Claim]
Suppose not.  Then for every relatively compact neighborhood $W\subset
U$ of $K$ there exists $\gamma_W\in G$ and $s_W \in W$ such that 
\begin{equation}
\label{eq:137}
\|\alpha_{s_W}(g(s_W\inv \gamma_W)) - g(\gamma_W)\| \geq \epsilon/2.
\end{equation}
When we order $W$ by reverse inclusion the sets $\{\gamma_W\}$ and
$\{s_W\}$ form nets in $G$ and $S$ respectively.  In order for
\eqref{eq:137} to hold we must have either $s_W\inv \gamma_W \in\supp g$
or $\gamma_W\in\supp g$ for each $W$.  In either case we have
$r(\gamma_W)\in K$ and, since $W$ is a neighborhood of $K$, 
$\gamma_W \in W\supp g\subset \overline{U}\supp g$.  
Furthermore, $s_W\in W\subset \overline{U}$ for all $W$.  
Since $\overline{U}$ and
$\overline{U}\supp g$ are compact, we can pass to a subnet, twice,
relabel, and find $s\in S$ and $\gamma\in G$ such that 
 $s_W\rightarrow s$ and $\gamma_W\rightarrow
\gamma$.  However, $s_W$ is eventually in every neighborhood of
$K$ so that we must have $s\in K \subset G\unit$.  This implies that $s_W\inv
\gamma_W \rightarrow \gamma_W$.  Using the continuity of the
action, this contradicts \eqref{eq:137}.  
\end{proof}

Let $O$ be the open set from above and choose $f\in C_c(S)^+$ such that
$\supp f \subset O$ and that $\int_S f(s)\beta^u(s) = 1$
for all $u\in K$.  Next, let $\{a_l\}$ be an approximate identity for $A$.
We make the following claim. 
\begin{claim}
There exists $l_0$ such that 
\begin{equation}
\label{eq:138}
\|a_{l_0}(r(\gamma))\alpha_s(g(s\inv \gamma)) - \alpha_s(g(s\inv
\gamma))\| < \epsilon /2 
\end{equation}
for all $s\in \supp f$ and $\gamma \in G$.  
\end{claim}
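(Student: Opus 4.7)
The plan is to reduce the claim to a standard compactness-plus-approximate-identity argument in the $C_0(G\unit)$-algebra $A$. First I would observe that if $\alpha_s(g(s\inv\gamma)) \neq 0$ then necessarily $s\inv\gamma \in \supp g$, so $\gamma \in \supp f \cdot \supp g$. Because $\supp f$ (contained in the relatively compact set $O$) and $\supp g$ are both compact, the set
\[
L = \{\alpha_s(g(s\inv\gamma)) : s\in\supp f,\ \gamma\in\supp f\cdot\supp g,\ r(s)=r(\gamma)\}
\]
is the continuous image in $\mcal{A}$ of a compact subset of $S * G$, and hence is itself a compact subset of the bundle $\mcal{A}$. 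Outside of $L$ the inequality \eqref{eq:138} is trivial, so it suffices to find a single $l_0$ with $\|a_{l_0}(p(\xi))\xi - \xi\| < \epsilon/2$ uniformly for $\xi\in L$.

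Next, I would use the standard trick for moving from the fibrewise approximate identity property to a uniform one over a bundle-compact set. For each $\xi\in L$, pick a section $b_\xi \in A$ with $b_\xi(p(\xi)) = \xi$. By the upper-semicontinuity of the norm, the set $V_\xi = \{\eta \in \mcal{A} : \|\eta - b_\xi(p(\eta))\| < \epsilon/8\}$ is open in $\mcal{A}$ and contains $\xi$. Extract a finite subcover $V_{\xi_1},\ldots,V_{\xi_n}$ of $L$.

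Because $\{a_l\}$ is an approximate identity for $A$, for each $i$ we eventually have $\|a_l b_{\xi_i} - b_{\xi_i}\|_A < \epsilon/8$, and, using that the net is indexed by a directed set, we may choose a single $l_0$ such that this holds for every $i = 1,\ldots,n$. Since norm on $A$ dominates the fibrewise norms (and $\|a_{l_0}\|\le 1$ for a contractive approximate identity), for any $\xi \in L$, picking an index $i$ with $\xi \in V_{\xi_i}$ gives
\[
\|a_{l_0}(p(\xi))\xi - \xi\|
\le \|\xi - b_{\xi_i}(p(\xi))\| + \|a_{l_0}(p(\xi))b_{\xi_i}(p(\xi)) - b_{\xi_i}(p(\xi))\| + \|b_{\xi_i}(p(\xi)) - \xi\|
< \tfrac{3\epsilon}{8} < \tfrac{\epsilon}{2},
\]
which yields \eqref{eq:138}.

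The only subtle point is the compactness of $L$ inside $\mcal{A}$; once one verifies that the map $(s,\gamma)\mapsto \alpha_s(g(s\inv\gamma))$ is jointly continuous on the relevant compact subset of $S*G$ (which follows from the continuity of $\alpha$ and $g$), the remainder is a routine finite-cover argument combined with the approximate identity property. I would expect no serious obstacles beyond being careful that the covering is extracted before choosing $l_0$, so that a single index dominates all of the finitely many section-level estimates.
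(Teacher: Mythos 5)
Your argument is correct, but it is organized quite differently from the paper's. The paper proves the claim by contradiction: assuming failure for every $l$, it produces nets $s_l\in\supp f$ and $\gamma_l\in(\supp f)\supp g$, passes to convergent subnets, chooses a single section $b\in A$ through the limit element $\alpha_s(g(s^{-1}\gamma))$, and contradicts the assumed lower bound using $a_lb\to b$. You instead argue directly: you exhibit the set $L$ of all relevant elements as the continuous image of a compact subset of the fibred product, hence a compact subset of $\mcal{A}$, cover it by finitely many ``tubes'' $V_{\xi_i}$ around sections $b_{\xi_i}$, and choose one $l_0$ handling all finitely many sections at once. Both proofs rest on the same ingredients --- compactness of $\supp f$ and $(\supp f)\supp g$, joint continuity of $(s,\gamma)\mapsto\alpha_s(g(s^{-1}\gamma))$, and the approximate identity applied to finitely many (respectively, one) sections --- so neither is stronger, but yours isolates a cleanly reusable statement (uniform convergence $a_l(p(\xi))\xi\to\xi$ over a compact subset of a usc-bundle) and avoids the subnet bookkeeping, at the cost of having to justify two bundle-theoretic points that you correctly flag: openness of the tubes $V_\xi$ uses upper semicontinuity of the norm, and the first term of your final estimate needs $\sup_l\|a_l\|\le 1$, which is harmless since approximate identities may always be taken in the unit ball. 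One further remark: the total space of a usc-bundle need not be Hausdorff, but compactness of $L$ as a continuous image of a compact set (in the finite-subcover sense) holds regardless, so your covering step is sound.
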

\begin{proof}[Proof of Claim.]
Suppose not.  Then for each $l$ there exists $\gamma_l\in G$ and
$s_l\in\supp f$ such
that
\begin{equation}
\label{eq:139}
\|a_l(r(\gamma_l))\alpha_{s_l}(g(s_l\inv \gamma_l)) -
\alpha_{s_l}(g(s_l\inv\gamma_l))\| \geq \epsilon /2.
\end{equation}
In order for \eqref{eq:139} to hold we must have
$s_l\inv\gamma_l\in \supp g$ for all $l$. But then $\gamma_l \in
(\supp f)\inv\supp g$.  Since both this set and $\supp f$ are
compact, we can pass through two subnets, relabel, and find $\gamma\in G$
and $s\in S$ such that
$\gamma_l\rightarrow \gamma$ and $s_l\rightarrow s$.  However, we now
have $\alpha_{s_l}(g(s_l\inv\gamma_l))\rightarrow
\alpha_s(g(s\inv\gamma))$.  Choose $b\in A$ such that
$b(r(\gamma)) = \alpha_s(g(s\inv \gamma))$.  Then $a_l
b \rightarrow b$.  
Since $\alpha_{s_l}(g(s_l\inv\gamma_l))\rightarrow b(r(\gamma))$ and 
$b(r(\gamma_l))\rightarrow b(r(\gamma))$, 
we must have $\|\alpha_{s_l}(g(s_l\inv\gamma_l))-b(r(\gamma_l))\| \rightarrow 0$.
Putting everything together, it follows that, eventually,
\begin{align*}
\|a_l(r(\gamma_l))\alpha_{s_l}(g(s_l\inv\gamma_l)) -
\alpha_{s_l}(g(s_l\inv\gamma_l))\| 
&\leq 2\|\alpha_{s_l}(g(s_l\inv \gamma_l)) -
b(r(\gamma_l))\| + \|a_l b - b\| \\
&<\epsilon /2
\end{align*}
and this contradicts \eqref{eq:139}.
\end{proof}

Consider $f\otimes a_{l_0}\in \Gamma_c(S,p^*\mcal{A})$.  First
observe that $\supp f\otimes a_{l_0} \subset U$ and that $U$ was
chosen independently of $\epsilon$.  Next, given $\gamma\in G$ if
$r(\gamma)\not\in K$ then $g(s\gamma) = 0$ for all $s\in
S_{r(\gamma)}$ so that in particular 
\[
M(f\otimes a_{l_0})g(\gamma) - g(\gamma) = \int_S f(s)a_{l_0}(r(\gamma))
\alpha_s(g(s\inv \gamma))d\beta^{r(\gamma)}(s) = 0.
\]
If $r(\gamma)\in K$ then 
\begin{align*}
\|M(f\otimes a_{l_0})&g(\gamma) - g(\gamma)\| \\
=& \left\| \int_S
  f(s)a_{l_0}(r(\gamma))\alpha_s(g(s\inv\gamma))d\beta^{r(\gamma)}(s)
  - \int_S f(s)d\beta^{r(\gamma)}(s) g(\gamma)\right\| \\
\leq& \int_S
  f(s)\|a_{l_0}(r(\gamma))\alpha_s(g(s\inv\gamma))-g(\gamma)\|d\beta^{r(\gamma)}(s)
  \\
\leq& \int_S f(s)
\|a_{l_0}(r(\gamma))\alpha_s(g(s\inv\gamma))-\alpha_s(g(s\inv\gamma))\|
d\beta^{r(\gamma)}(s) \\
&+ \int_S f(s) \|\alpha_s(g(s\inv\gamma)) - g(\gamma)\|
d\beta^{r(\gamma)}(s) \\
&< \epsilon/2 + \epsilon/2 = \epsilon.
\end{align*}
Hence $\|M(f\otimes a_{l_0})g - g\|_\infty < \epsilon$.  This suffices
to show that elements of the form $M(f)g$ are dense in
$\Gamma_c(G,r^*\mcal{A})$ with respect to the inductive limit
topology.  
\end{proof}

\subsection{Identifying the Spectrum}
We have now acquired everything we need to identify the spectrum of
$A\rtimes G$ and prove the main result of the paper. 

\begin{theorem}
\label{thm:crossedstab}
Suppose $(A,G,\alpha)$ is a groupoid dynamical system and that the
isotropy subgroupoid $S$ has a Haar system.  
If $G$ is regular then $\Phi:(A\rtimes
S)\sidehat\rightarrow (A\rtimes G)\sidehat$ defined by $\Phi(R) =
\Ind_S^GR$ is open and factors to a homeomorphism from $(A\rtimes
S)\sidehat/G$ onto $(A\rtimes G)\sidehat$. 
\end{theorem}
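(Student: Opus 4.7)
The plan is to assemble the structural facts already established. Proposition \ref{prop:1} says $\Phi$ is a continuous surjection; Corollary \ref{cor:1} gives the continuous $G$-action on $(A\rtimes S)\sidehat$; and Proposition \ref{prop:104} identifies the fibres of $\Phi$ with $G$-orbits. Consequently $\Phi$ factors through the orbit map $q:(A\rtimes S)\sidehat\to (A\rtimes S)\sidehat/G$ to a continuous bijection $\bar\Phi$. Since $G$ acts by homeomorphisms, $q$ is automatically open, so $\bar\Phi$ is a homeomorphism precisely when $\Phi$ is open. It therefore suffices to prove that $\Phi$ is open.

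To prove openness I would argue that $\bar\Phi\inv$ is continuous by means of the restriction map $\Res_M$ of Corollary \ref{cor:15}, which plays the role of an honest inverse to $\Phi$ at the level of primitive ideals. The crucial calculation is Lemma \ref{lem:35}: for irreducible $R$ of $A(u)\rtimes S_u$,
\[
\Res_M(\ker \Phi(R)) = \bigcap_{\gamma\in G_u} \ker(\gamma\cdot R),
\]
and under hull-kernel this ideal has hull equal to the orbit closure $\overline{G\cdot R}$ inside $(A\rtimes S)\sidehat$. In particular $\Res_M$ descends to a continuous assignment from $(A\rtimes G)\sidehat$ to the space of closed $G$-saturated subsets of $(A\rtimes S)\sidehat$, sending $\pi \mapsto \overline{\Phi\inv(\pi)}$.

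With this description in hand, openness of $\Phi$ follows by a standard closed-set argument. Let $F\subset (A\rtimes S)\sidehat$ be closed and $G$-saturated, and suppose $\Phi(R_i) \to \Phi(R)$ with $R_i \in F$; the goal is to show $G\cdot R\subset F$. Continuity of $\Res_M$ together with Lemma \ref{lem:35} gives Fell-topology convergence of the orbit kernels $\bigcap_\gamma \ker(\gamma\cdot R_i) \to \bigcap_\gamma \ker(\gamma\cdot R)$ in $\mcal{I}(A\rtimes S)$, and under the hull-kernel correspondence this says that every neighbourhood of a point of $\overline{G\cdot R}$ eventually meets $\overline{G\cdot R_i}$, which by hypothesis is contained in $F$. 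Since $F$ is closed, $\overline{G\cdot R}\subset F$, so $\Phi(R)\in\Phi(F)$, showing $\Phi(F)$ is closed and $\Phi$ is open.

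The main obstacle is the hull-kernel translation in the last paragraph: converting Fell-topology convergence of orbit kernels in $\mcal{I}(A\rtimes S)$ into a usable statement about orbits themselves, either as subsets of $(A\rtimes S)\sidehat$ or as points of $(A\rtimes S)\sidehat/G$. This is exactly where the preparatory work in the previous subsections pays off: the explicit models $T^R$ and $N^R$, together with the compatibility of $N^R$ with the multiplication representation in Lemma \ref{lem:34}, give the concrete handle on the topology of induced representations needed to make the reduction rigorous, in close parallel to Williams' argument in \cite[\S 5]{primtrangroup}.
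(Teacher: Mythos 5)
Your proposal is correct and follows essentially the same route as the paper: continuity and surjectivity from Proposition \ref{prop:1}, fibres-equal-orbits from Proposition \ref{prop:104}, and openness via $\Res_M$, Lemma \ref{lem:35}, and the hull-kernel convergence step, which is exactly \cite[Lemma 8.38]{tfb2} (pure ideal-space general nonsense) rather than something requiring the $T^R$/$N^R$ machinery you point to at the end. The only genuine difference is the endgame: your closed-saturated-set formulation lets you remain inside the orbit closures $\overline{G\cdot R_i}$, whereas the paper verifies the net criterion for openness directly and therefore needs an extra diagonal subnet argument to upgrade the limit points supplied by \cite[Lemma 8.38]{tfb2} to genuine orbit points with $\gamma_i\cdot R_i\rightarrow R$.
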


\begin{proof}
It follows from Proposition \ref{prop:1} that $\Phi$ is a continuous
surjection and from Proposition \ref{prop:104} that $\Phi$ factors to
a bijection on $(A\rtimes S)\sidehat/G$. All that remains is to show
that $\Phi$ is open.  Suppose
$\Phi(R_i)\rightarrow \Phi(R)$ so that, almost by definition,
$\ker \Phi(R_i)\rightarrow \ker \Phi(R)$.   Using Corollary
\ref{cor:15} we know $\Res_M$ is continuous and therefore 
\[
\Res_M \ker \Phi(R_i) = \Res_M \ker \Ind_S^G R_i \rightarrow
\Res_M \ker \Phi(R) = \Res_M\ker \Ind_S^G R.
\]
Let $u= \sigma(R)$ and $u_i = \sigma(R_i)$ for all $i$ where $\sigma:(A\rtimes
S)\sidehat \rightarrow G\unit$ is the usual map arising from the
$C_0(G\unit)$-action on $A\rtimes S$.  
Using the identifications made in Remark \ref{rem:1}, as well as Lemma
\ref{lem:35}, we have 
\begin{align*}
\Res_M\ker \Ind_S^G R &= \bigcap_{\gamma\in G_u} \ker(\gamma\cdot R),\quad\text{and}\\
\Res_M\ker \Ind_S^G R_i  &= \bigcap_{\gamma\in
  G_{u_i}}\ker(\gamma\cdot R_i)\quad\text{for all $i$.}
\end{align*}
It follows from the definition of the Jacobson topology 
that the closed sets
associated to $\Res_M\ker \Ind_S^G R$  and $\Res_M\ker \Ind_S^G R_i$
are 
\[
F = \overline{\{ \ker \gamma\cdot R : \gamma\in G_u\}},\quad\text{and}\quad
F_i = \overline{\{\ker\gamma\cdot R_i : \gamma \in G_{u_i}\}},
\]
respectively.  Since $\ker R\in F$ it follows from \cite[Lemma 8.38]{tfb2}
that, after passing to a subnet and relabeling, there exists $P_i \in
F_i$ such that $P_i\rightarrow \ker R$.  

Let $\mcal{U}$ be a neighborhood basis of $\ker R$.  
For each $U\in\mcal{U}$ there exists $i_0$ such that $i\geq i_0$ implies
$P_i\in U$.  We let $M:= \{ (U,i) : U\in\mcal{U}, P_i\in U\}$
and direct $M$ by decreasing $U$ and increasing $i$.  Then $M$ is a
subnet of $i$ such that $P_{(U,i)}\in U$ for all $(U,i)\in M$. 
Use this fact to find for each $(U,i)\in M$ some
$\gamma_{(U,i)}\in G_{u_i}$ such that $\ker \gamma_{(U,i)}\cdot R_i \in U$.  
Next, given any $U_0\in \mcal{U}$, choose $i_0$ so
that $P_{i_0}\in U$ and $(U_0,i_0)\in M$.  If $(U,i)\in M$
such that $(U_0,i_0)\leq (U,i)$ then $\ker\gamma_{(U,i)}\cdot R_i \in
U\subset U_0$.  Thus $\ker \gamma_{(U,i)}\cdot R_i\rightarrow \ker R$,
and therefore $\gamma_{(U,i)}\cdot R_i\rightarrow R$.  This suffices to
show that $\Phi$ is open. 
\end{proof}

\begin{remark}
\label{rem:29}
If there is a problem with Theorem \ref{thm:crossedstab} it is that
$(A\rtimes S)\sidehat$ can be just as mysterious as $(A\rtimes
G)\sidehat$.  For instance, if $A$ has Hausdorff spectrum (and is
separable) then each
fibre $A(u)$ can be identified with the compacts. In this case 
$A(u)\rtimes S_u$ is relatively well understood \cite[Section
7.3]{tfb2} and in particular is isomorphic to 
$C^*(S_u,\bar{\omega}_u)$ where $[\omega_u]$ is the Mackey obstruction for
$\alpha|_{S_u}$.  However, even if the stabilizers vary continuously,
the collection $\{\omega_u\}$ may be poorly behaved and identifying
the total space topology of $(A\rtimes S)\sidehat$ may be difficult. 
\end{remark}

The following corollary is immediate and interesting enough to be
worth writing down.  

\begin{corr}
Suppose $(A,G,\alpha)$ is a groupoid dynamical system and that $G$ is a
regular principal groupoid.  Then $(A\rtimes G)\sidehat$ is homeomorphic
to $\widehat{A}/G$. 
\end{corr}

\section{Groupoid Algebras}
\label{sec:groupoid-algebras}

We can use the machinery developed in
Section \ref{sec:cross-prod-with} to prove Theorem
\ref{thm:crossedstab} for certain
non-regular groupoid algebras.  First, we state the following
corollary, which follows immediately from Corollary \ref{cor:1}.  

\begin{corr}
\label{cor:17}
Suppose $G$ is a locally compact Hausdorff groupoid
and that the stabilizer subgroupoid $S$ has a Haar system.  
Then there is a continuous  action of $G$ on
$C^*(S)\sidehat$ given for $\gamma\in G$ and $U\in C^*(S)\sidehat$ by 
\begin{equation}
\label{eq:145}
\gamma\cdot U(s) = U(\gamma\inv s \gamma).
\end{equation}
This action factors to an action of $G$ on $\Prim C^*(S)$.  
\end{corr}

Next, we note that the main result
of \cite{irredreps} states that every representation of $C^*(G)$ induced from
a stability group is irreducible.  Therefore, 
even when $G$ is not regular, we may induce representations from
$C^*(S)\sidehat$ to elements of the spectrum of $C^*(G)$.  What's more, we
obtain the following

\begin{prop}
\label{prop:105}
Let $G$ be a locally compact Hausdorff groupoid and suppose the 
isotropy subgroupoid $S$ has a Haar system.  
Then $\Phi:C^*(S)\sidehat\rightarrow
C^*(G)\sidehat$ defined by $\Phi(U) = \Ind_S^G U$ is
continuous and open.  
\end{prop}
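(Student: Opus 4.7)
The plan is to mirror the strategy of Theorem \ref{thm:crossedstab}, viewing $C^*(G)$ and $C^*(S)$ as instances of the crossed product machinery developed in Section \ref{sec:cross-prod-with}, and replacing the appeal to regularity of $G$ with the irreducibility result of \cite{irredreps}.

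First I would verify that $\Phi$ is well defined and continuous. Proposition \ref{prop:65} identifies $C^*(S)\sidehat$ with the disjoint union $\coprod_{u\in G\unit}C^*(S_u)\sidehat$, so every irreducible representation $U$ of $C^*(S)$ lifts via restriction from some $C^*(S_u)$. The main result of \cite{irredreps} guarantees that $\Ind_{S_u}^G U$ is irreducible without any regularity assumption on $G$, so $\Phi$ really does map into $C^*(G)\sidehat$. Continuity then follows from the continuity of Rieffel induction \cite[Corollary 3.35]{tfb}, exactly as in Proposition \ref{prop:1}.

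The bulk of the work is openness, and here I would repeat the argument from the proof of Theorem \ref{thm:crossedstab} essentially verbatim. Proposition \ref{prop:101} and Corollary \ref{cor:15} apply in this setting --- neither of them uses regularity of $G$ --- and yield a continuous restriction map $\Res_M:\mcal{I}(C^*(G))\rightarrow \mcal{I}(C^*(S))$. Given a convergent net $\Phi(U_i)\rightarrow \Phi(U)$, continuity of $\Res_M$ gives $\Res_M\ker\Phi(U_i)\rightarrow \Res_M\ker\Phi(U)$, and Lemma \ref{lem:35} identifies these ideals as $\bigcap_{\gamma\in G_{u_i}}\ker(\gamma\cdot U_i)$ and $\bigcap_{\gamma\in G_u}\ker(\gamma\cdot U)$ respectively, where $u$ and $u_i$ denote the images under the natural map $\sigma:C^*(S)\sidehat \rightarrow G\unit$. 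The associated Jacobson-closed sets are the closures of the $G$-orbits of $\ker U_i$ and $\ker U$; applying \cite[Lemma 8.38]{tfb2} I can pass to a subnet and find $P_i$ in the former closure with $P_i\rightarrow \ker U$. The neighborhood-basis maneuver from the end of Theorem \ref{thm:crossedstab} then extracts $\gamma_i\in G_{u_i}$ with $\ker(\gamma_i\cdot U_i)\rightarrow \ker U$, so that $\gamma_i\cdot U_i\rightarrow U$ in $C^*(S)\sidehat$, which suffices to show $\Phi$ is open.

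The main (non-)obstacle is conceptual rather than technical: without regularity one cannot invoke \cite[Proposition 4.13]{inducpaper}, so $\Phi$ need not factor to a bijection on the orbit space and we do not obtain a full homeomorphism in the style of Theorem \ref{thm:crossedstab}. This is precisely why the proposition claims only continuity and openness. The only place that regularity entered the openness argument in Theorem \ref{thm:crossedstab} was in ensuring that $\Phi$ actually takes values in the spectrum, a role now played by \cite{irredreps}; every other ingredient --- Proposition \ref{prop:101}, Corollary \ref{cor:15}, Lemma \ref{lem:35}, and the continuity of Rieffel induction --- is available for arbitrary $G$.
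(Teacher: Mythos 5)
Your proposal is correct and follows essentially the same route as the paper: irreducibility of the induced representations via \cite{irredreps} in place of regularity, continuity from Rieffel induction, and openness by running the $\Res_M$/Lemma \ref{lem:35}/\cite[Lemma 8.38]{tfb2} argument from Theorem \ref{thm:crossedstab} verbatim. Your closing observation about exactly where regularity was (and was not) needed matches the paper's implicit reasoning.
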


\begin{proof}
It follows from the above discussion that $\Phi$ maps into
$C^*(G)\sidehat$, and
the continuity of $\Phi$ follows from the general theory of Rieffel
induction.  All that is left to do is show $\Phi$ is open.  
Suppose $\Ind U_i \rightarrow \Ind
U$ in $C^*(G)\sidehat$.  Since $\Res_M $ is continuous, it follows
that 
\[
I_i = \Res_M \ker \Ind_S^G U_i \rightarrow I= \Res_M \ker \Ind_S^G U.
\]
Lemma \ref{lem:35} then tells us that 
\[
I = \bigcap_{\gamma\in G_{\hat{p}(U)}} \ker \gamma\cdot U,\quad\text{and}\quad I_i = \bigcap_{\gamma\in
  G_{\hat{p}(U_i)}} \ker \gamma \cdot U_i\quad\text{for all $i$.}
\]
Hence, the closed sets associated to $I$ and $I_i$ are 
\[
F = \overline{\{\ker \gamma\cdot U : \gamma\in G_{\hat{p}(U)}\}},\quad\text{and}\quad 
F_i = \overline{\{\ker \gamma\cdot U_i : \gamma\in G_{\hat{p}(U_i)}\}},
\]
respectively.  
Since $\ker U\in F$ it follows from \cite[Lemma
8.38]{tfb2} that, after passing to a subnet and relabeling, there
exists $P_i \in F_i$ such that
$P_i\rightarrow \ker U$.  It then follows from an argument similar
to that at the end of the proof of Theorem \ref{thm:crossedstab} that
we can pass to a subnet and find $\gamma_i$ such that $\gamma_i\cdot
U_i \rightarrow U$. This suffices to show that $\Phi$ is open. 
\end{proof}

Now, if the stability groups of $G$ are GCR it follows from \cite[Theorem
1.1]{ccrgca} that $C^*(G)$ is Type I or GCR if and only if $G$ is
regular.  Since we are extending Theorem \ref{thm:crossedstab} to
non-regular groupoids, this means potentially working with non-Type I
$C^*$-algebras.  Thus we must use the primitive ideal space
instead of the spectrum.  The
following is an immediate consequence of Proposition \ref{prop:105},
once we extend induction to the primitive ideals in the usual fashion.

\begin{corr}
\label{cor:19}
Let $G$ be a locally compact Hausdorff groupoid and suppose the 
isotropy subgroupoid $S$ has a Haar system.  
Then $\Psi:\Prim C^*(S) \rightarrow
\Prim C^*(G)$ defined by $\Psi(P) = \Ind_S^G P$ is
continuous and open.   
\end{corr}

We would like to factor $\Psi$ to a homeomorphism and to do that 
we will need to get a
handle on the equivalence relation determined by $\Psi$.  

\begin{lemma}
\label{lem:38}
Let $G$ be a locally compact Hausdorff groupoid and suppose the
isotropy subgroupoid $S$ has a Haar system.  
Then $\Psi(P) = \Psi(Q)$ if
and only if $\overline{G\cdot P} = \overline{G\cdot Q}$. 
\end{lemma}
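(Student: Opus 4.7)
The plan is to apply the restriction map $\Res_M$ to both sides of the equation $\Psi(P)=\Psi(Q)$ and then translate through the hull--kernel correspondence on $\Prim C^*(S)$. A preliminary step is to upgrade Lemma \ref{lem:35} from kernels of irreducible representations to primitive ideals: if $P=\ker R$ with $R$ an irreducible representation of $C^*(S_u)$, then Lemma \ref{lem:35} gives
\[
\Res_M\Psi(P) \;=\; \Res_M\ker\Ind_{S_u}^G R \;=\; \bigcap_{\gamma\in G_u}\ker(\gamma\cdot R) \;=\; \bigcap_{Q\in G\cdot P}Q,
\]
and adding primitive ideals in the Jacobson closure of $G\cdot P$ does not shrink this intersection, so in fact $\Res_M\Psi(P)=\bigcap_{Q\in\overline{G\cdot P}}Q$. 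Nothing in the proof of Lemma \ref{lem:35} used regularity of $G$, so this transfers directly to the groupoid algebra setting, with $\Res_M$ playing its obvious analogous role for $C^*(G)$.

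For the forward implication, suppose $\Psi(P)=\Psi(Q)$. Applying $\Res_M$ to both sides and using the formula above yields
\[
\bigcap_{R\in\overline{G\cdot P}} R \;=\; \bigcap_{R\in\overline{G\cdot Q}} R.
\]
Since closed subsets of $\Prim C^*(S)$ are determined by the intersection of the primitive ideals they contain, we conclude $\overline{G\cdot P}=\overline{G\cdot Q}$.

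For the reverse implication, I would first record that $\Psi$ is constant on $G$-orbits, i.e., $\Psi(\gamma\cdot P)=\Psi(P)$ for all $\gamma$. This is the primitive ideal shadow of the first statement of Proposition \ref{prop:104}, whose proof constructs an explicit intertwining unitary $W$ and does not invoke regularity of $G$. Assuming $\overline{G\cdot P}=\overline{G\cdot Q}$, we have $Q\in\overline{G\cdot P}$, so by the continuity of $\Psi$ (Corollary \ref{cor:19}),
\[
\Psi(Q)\in\Psi\bigl(\overline{G\cdot P}\bigr)\subseteq\overline{\Psi(G\cdot P)}=\overline{\{\Psi(P)\}}.
\]
In the Jacobson topology this means $\Psi(Q)\supseteq\Psi(P)$; by symmetry $\Psi(P)\supseteq\Psi(Q)$, and hence $\Psi(P)=\Psi(Q)$.

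The main technical obstacle is not a single deep step but rather verifying that the infrastructure from Section \ref{sec:cross-prod-with}---the restriction process $\Res_M$, Lemma \ref{lem:35}, and the $G$-invariance half of Proposition \ref{prop:104}---goes through for $C^*(G)$ without the regularity hypothesis. Once this bookkeeping is in hand, the proof reduces to a clean application of the Jacobson topology.
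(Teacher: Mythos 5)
Your proposal is correct and follows essentially the same route as the paper: the forward implication via $\Res_M$ and Lemma \ref{lem:35} combined with the hull--kernel correspondence, and the reverse implication via continuity and $G$-equivariance of induction (which the paper states in one line and you spell out, correctly observing that the relevant half of Proposition \ref{prop:104} and all of Lemma \ref{lem:35} do not use regularity).
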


\begin{proof}
Suppose $U,V\in C^*(S)\sidehat$ such that $P = \ker U$ and $Q = \ker
V$.  If $\ker \Ind_S^G V = \ker \Ind_S^G U$ then
$\Res_M \ker \Ind_S^G V = \Res_M \ker \Ind_S^G U$.
However, it now follows from Lemma \ref{lem:35} that 
\[
\bigcap_{\gamma\in G_{\hat{p}(U)}}\gamma\cdot P = 
\bigcap_{\gamma\in G_{\hat{p}(V)}}\gamma\cdot Q
\]
where $\hat{p}$ is the canonical map from $C^*(S)\sidehat$ onto
$S\unit$. 
This implies that the closed sets in $\Prim C^*(S)$ associated
to these ideals must be the same.  Hence $\overline{G\cdot P} =
\overline{G\cdot Q}$.  The reverse direction follows immediately from
the fact that $\Phi$ is continuous and $G$-equivariant.  
\end{proof}

At this point we recall from \cite{geneffhan} that a groupoid is
said to be EH-regular if every
primitive ideal is induced from an isotropy subgroup.  That is, given
$P\in \Prim C^*(G)$ there exists $u\in G\unit$ and $Q\in\Prim
C^*(S_u)$ such that $P = \Ind_{S_u}^G Q$.  Of course, it follows from
\cite[Theorem 4.1]{inducpaper} that regular groupoids are
EH-regular.  In the non-regular case the main result in
\cite[Theorem 2.1]{geneffhan} states that if a groupoid $G$ is amenable in
the sense of Renault \cite{amenable} then $G$ is EH-regular.  
This allows us to give the promised strengthening of Theorem
\ref{thm:crossedstab}.  First, however, recall that
the {\em $T_0$-ization} of a topological space $X$ is the quotient space
$X^{T_0} := X/\sim$ where $x\sim y$ if and only if $\overline{\{x\}} =
\overline{\{y\}}$.  

\begin{theorem}
\label{thm:scalarstab}
Suppose $G$ is a locally compact Hausdorff groupoid
and that the stabilizer subgroupoid $S$ 
has a Haar system.  If $G$ is EH-regular, and in
particular if $G$ is either amenable or regular, then the map
$\Psi:\Prim C^*(S) \rightarrow \Prim C^*(G)$ defined by $\Psi(P) =
\Ind_S^G P$ factors to a homeomorphism
of $\Prim C^*(G)$ with $(\Prim C^*(S)/G)^{T_0}$.  
\end{theorem}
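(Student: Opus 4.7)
The plan is to combine Corollary \ref{cor:19} (continuity and openness of $\Psi$) with Lemma \ref{lem:38} and the EH-regular hypothesis, and then verify that the induced factor map is a homeomorphism.  Specifically, I want to show that $\Psi$ factors through the canonical surjection $\pi:\Prim C^*(S)\to(\Prim C^*(S)/G)^{T_0}$ to a continuous open bijection $\bar{\Psi}$, which is then automatically a homeomorphism.

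I would first verify surjectivity of $\Psi$.  Given $P\in\Prim C^*(G)$, EH-regularity produces $u\in G\unit$ and $Q\in\Prim C^*(S_u)$ with $P=\Ind_{S_u}^G Q$.  Using the canonical restriction map $\rho:C^*(S)\to C^*(S_u)$ arising from the $C_0(S\unit)$-algebra structure on $C^*(S)$ (Proposition \ref{prop:65} specialized to $A=C_0(S\unit)$), lift $Q$ to the primitive ideal $\tilde{Q}=\rho^{-1}(Q)$ of $C^*(S)$.  Lemma \ref{lem:1b}, as reinterpreted in Remark \ref{rem:1}, then gives $\Psi(\tilde{Q})=\Ind_S^G\tilde{Q}=\Ind_{S_u}^G Q=P$.

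Next I would identify the equivalence relation on $\Prim C^*(S)$ cut out by $\Psi$ with the $T_0$-ization relation on the orbit space $\Prim C^*(S)/G$.  Lemma \ref{lem:38} says $\Psi(P)=\Psi(Q)$ iff $\overline{G\cdot P}=\overline{G\cdot Q}$.  The key technical point is that the closure of every $G$-invariant subset of $\Prim C^*(S)$ is again $G$-invariant; this uses continuity of the action from Corollary \ref{cor:17} together with openness of $s:G\to G\unit$ (which holds because $G$ has a Haar system) to lift a convergent net in the unit space to a convergent net in $G$ and then apply the action map.  With this in hand, $\overline{G\cdot P}$ is the smallest $G$-invariant closed set containing $P$, so its image in $\Prim C^*(S)/G$ is exactly $\overline{\{[P]\}}$, and the $T_0$-ization relation $\overline{\{[P]\}}=\overline{\{[Q]\}}$ coincides with $\overline{G\cdot P}=\overline{G\cdot Q}$.

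To conclude, note that $\pi$ is a continuous open surjection: the orbit quotient is open by the standard argument for continuous groupoid actions with open source, and the $T_0$-izing map is open because every open set is automatically saturated by the $T_0$-relation.  By surjectivity of $\Psi$ and the preceding identification of equivalence classes, $\Psi$ factors through $\pi$ to a bijection $\bar{\Psi}:(\Prim C^*(S)/G)^{T_0}\to\Prim C^*(G)$.  Continuity of $\bar{\Psi}$ is the universal property of the quotient topology; openness follows from openness of $\Psi$ together with surjectivity of $\pi$.  The main obstacle in the plan is the topological claim that closures of $G$-invariant subsets of $\Prim C^*(S)$ are $G$-invariant; the rest of the argument is either cited directly or follows from standard facts about quotient topologies and $T_0$-izations.
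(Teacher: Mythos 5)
Your proposal is correct and follows essentially the same route as the paper: Corollary \ref{cor:19} for continuity and openness, EH-regularity for surjectivity, and Lemma \ref{lem:38} together with the identification of $\overline{G\cdot P}=\overline{G\cdot Q}$ with the $T_0$-ization relation on $\Prim C^*(S)/G$ for injectivity of the factor map. You simply supply details (lifting $Q$ through the restriction map, and the $G$-invariance of closures of invariant sets via openness of the source map) that the paper dismisses as ``clearly'' and ``straightforward,'' and these details check out.
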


\begin{proof}
It follows from Corollary \ref{cor:19} that $\Psi$ is continuous and
open.  Surjectivity clearly follows from the fact that $G$ is EH-regular.
Finally, it is straightforward to show that $\overline{G\cdot P}
= \overline{G\cdot Q}$ in $\Prim C^*(S)$ if and only if
$\overline{\{G\cdot P\}} = \overline{\{G\cdot Q\}}$ in
$\Prim C^*(S)/G$.  Thus it follows 
from Lemma \ref{lem:38} that the factorization of
$\Psi$ to $(\Prim C^*(S)/G)^{T_0}$ is injective and is therefore a
homeomorphism. 
\end{proof}

\begin{remark}
In the case where $S$ is abelian Theorem \ref{thm:scalarstab} is
particularly concrete because $\Prim C^*(S) = \widehat{S}$ is the dual
bundle \cite{bundleduality} associated to $S$.  
\end{remark}

As in Section \ref{sec:cross-prod-with}, we get the following
corollary, which in this case is a very slight extension of
\cite[Proposition 3.8]{principclark}.

\begin{corr}
If $G$ is an EH-regular,
principal groupoid then $\Prim C^*(G)$ is homeomorphic to $(G\unit/G)^{T_0}$. 
\end{corr}

\bibliographystyle{amsplain}
\bibliography{/home/goehle/references}

\end{document}